\documentclass[12pt]{article}
\usepackage{amsmath, amsfonts, amsthm, amssymb, verbatim}
\usepackage{graphicx}
\usepackage[mathcal]{euscript}
\usepackage{amstext}
\usepackage{amssymb,mathrsfs}
\usepackage{bbm}
\usepackage{xcolor}
\usepackage[utf8]{inputenc}
\usepackage{aeguill}
%
\hsize=126mm \vsize=180mm
\parindent=5mm
\newcommand{\R}{\mathbb R}
 \newcommand{\Z}{\mathbb Z}

\newcommand{\ve}{\varepsilon}

%

%

%

%

%

\newtheorem{theorem}{Theorem}[section]
\newtheorem{proposition}[theorem]{Proposition}
 \newtheorem{remark}[theorem]{Remark}

\newtheorem{corollary}[theorem]{Corollary}
\newtheorem{definition}[theorem]{Definition}


\begin{document}
\title{An Eulerian-Lagrangian Formulation 
of the Compressible 
Euler Equations with Vacuum}

\author{Wladimir Neves\footnote{Instituto de Matem\'atica, Universidade Federal do Rio de Janeiro, 
Brazil. E-mail:  {\sl wladimir@im.ufrj.br}
}, 
Christian Olivera\footnote{Departamento de Matem\'{a}tica, Universidade Estadual de Campinas, Brazil. 
E-mail:  {\sl  colivera@ime.unicamp.br}.
}}

\date{\today}

\maketitle

\noindent \textit{ {\bf Key words and phrases:} 
Compressible Euler equations, Eulerian-Lagrangian formulation,  
classical short-time solution, wellposedness.}

\vspace{0.3cm} \noindent {\bf MSC2010 subject classification:} 35Q31,
 35Q35, 76N10. 
 




\begin{abstract} 
In this paper, we present a novel Eulerian-Lagrangian formulation for the 
compressible isentropic Euler equations with vaccum. 
Using the developed Lagrangian flow map formulation, 
we show a short-time solution for a general pressure law. 
A particularly appealing feature of the approach used, it is well defined in the presence of vacuum, 
namely for compactly supported initial data which constitute an important problem in gas dynamics. 
Moreover, it does so without relying on any special symmetrization. 
While analogous results are well understood for incompressible fluids, 
the compressible setting, particularly in the presence of vacuum, remained open.
\end{abstract}



\section{Introduction\label{Intro}}

In this paper, we introduce an equivalence between the Eulerian and Lagrangian 
formulations of the compressible Euler equations for isentropic motion. By means of 
this formulation, we establish the well-posedness of regular solutions to the Euler 
equations with vacuum, see Definition \ref{RegSol}. We emphasize that the Lagrangian 
formulation is understood here in the sense of the flow map, in the geometric spirit 
introduced by Arnold \cite{Arnold} and later used by Ebin and Marsden \cite{Marsden}, 
rather than in Lagrangian coordinates (the so-called material description). The latter 
framework was employed, for instance, by Jang, Masmoudi \cite{Masmoudi} in their 
study of the compressible isentropic Euler equations with physical vacuum from the 
viewpoint of a free boundary problem. In this context, it is worth recalling that, 
the same framework developed in this paper for the compressible case, 
the Eulerian–Lagrangian formulation was previously studied for the incompressible 
Euler equations by Constantin \cite{Cont}, who established a local existence result
in H\"older Space $C^{1,\alpha}$, $(0<\alpha<1)$, for periodic solutions or satisfying suitable decay conditions. 
More recently, Pooley, Robinson \cite{Pooley} following \cite{Cont}, again in the incompressible case, 
considered this formulation 
in the Sobolev space $H^{s}(\mathbb{T}^d)$ with $s>\frac{d}{2} + 1$.

\medskip
Notably, the aim of this paper is to obtain a short-time solution to the 
compressible Euler system (see \eqref{EulerEqEntIsent} below)  
by applying our developed Lagrangian formulation. 
We emphasize that, in both the incompressible and compressible cases, 
the velocity vector field loses derivatives when analyzed from the Lagrangian flow map perspective. 
However, in the incompressible case, the incompressibility constraint enables the recovery of lost regularity, 
whereas in the compressible case, such recovery is not possible. 
This fundamental difference makes the compressible setting significantly more challenging. 
Indeed, in the incompressible framework, the evolution on the group of diffeomorphisms 
encodes the full physical structure of the problem, whereas this property breaks 
down in the compressible case. We were able to overcome these difficulties by applying a Nash-Moser iteration scheme 
to compensate for the loss of derivatives. 
Roughly speaking, following Nash \cite{Nash}, the key idea was to modify Newton’s scheme (see item 5 in the 
proof of Theorem \ref{maintheorem}) by introducing a smoothing operator at each iteration 
in order to compensate for the loss of regularity.
The reader is referred, for instance, to Moser \cite{Moser1, Moser2}, Hörmander \cite{Hormander1, Hormander2}, 
and Secchi \cite{Secchi}, 
where detailed presentations of the Nash–Moser iteration method can be found.

\medskip
A particularly appealing feature of the approach developed here is that, it is stable in the presence of vacuum, 
namely for compactly supported initial data which constitute an important problem in gas dynamics. 
Moreover, it does so without relying on any special symmetrization, such as the one introduced by 
Makino, Ukai, and Kawashima \cite{Makino}, which permits the density to vanish, albeit at the expense 
of obtaining a system that is no longer equivalent to the original Euler equations in the vacuum region. 
Conversely, the assumptions here on the initial data are dictated solely by minimal regularity requirements. 
In addition, the present framework covers general pressure laws depending on the density, 
without any restriction to a particular constitutive relation. 
In particular, the weighted trajectory mapping introduced in the proof of Theorem \ref{maintheorem}  
is the consistent treatment of the 
vacuum interface in polytropic gases with $1 < \gamma < 2$, where
the specific enthalpy possesses a gradient that becomes singular as the density approaches zero. 
In the vacuum region, the weight effectively deactivates the pressure term, allowing the momentum equation to recover the pure 
transport dynamics, see Remark \ref{RemVacumm}, thereby ensuring both 
mathematical stability and physical consistency across the fluid-vacuum interface.

\medskip
Finally, we recall that, in the absence of vacuum, that is to say, when the system is strictly hyperbolic throughout,
the theory of symmetric hyperbolic systems developed by Kato \cite{Kato}, and Lax \cite{Lax},  
yields the local in time existence of smooth solutions, see, for example, Majda \cite{Majda} and Benzoni-Gavage, Serre \cite{Serre}. 
Moreover, we observe that the formation of singularities in finite time for classical solutions was established by Sideris
\cite{Sideres}.

\subsection{Compressible Euler Equations} 

The basic balance laws of continuum physics in local form
are the evolution equations of conservation of mass, 
linear momentum and energy, respectively given by 
\begin{equation}
\label{GBCL}
\begin{aligned}
   &\partial_t \rho + {\rm div} (\rho \; \mathbf{u})= 0,
\\[5pt]
   &\partial_t (\rho \; \mathbf{u}) + {\rm div} (\rho \; \mathbf{u} \otimes \mathbf{u} )
   = {\rm div} \mathbf{T} + \rho \mathbf{b}
\\[5pt]
   &\partial_t (\rho E) + {\rm div} (\rho E  \mathbf{u} )
   = {\rm div} (\mathbf{T} \mathbf{u} - \mathbf{q}) + \rho r.
\end{aligned} 
\end{equation}
In this Eulerian formulation the scalar function 
$\rho(t,\mathbf{x})$ is the density, $\mathbf{u}(t,\mathbf{x})$ 
is the velocity vector field, 
$\mathbf{T}(t,\mathbf{x})$ is the symmetric stress tensor (e.g. non-polar materials),
$\mathbf{q}(t,\mathbf{x})$ is the heat flux vector, $\mathbf{b}(t,\mathbf{x})$ is the vector body forces, $e(t,\mathbf{x})$ is the internal energy, $r(t,\mathbf{x})$ is the energy supply
and $E$ denotes $e + \frac{|\mathbf{u}|^2}{2}$. 
Here $t \in (0,T)$ for some $T>0$, and $\mathbf{x} \in \Omega$, ($\Omega= \R^d$ or $\mathbb{T}^d$, where $\mathbb{T}^d= \R^d / 2 \pi \Z^d$), that is no boundary conditions are required, 
$(d= 2,3)$, and $\mathbf{u} \otimes \mathbf{u}= u_i u_j$, $(i,j= 1,2,3)$, where 
the usual summation convention is used. 

\medskip
We are interested in developing an Eulerian-Lagrangian formulation 
for the compressible Euler equations and applying it to show the 
existence of regular solutions for elastic fluids, 
inviscid (i.e. non-viscous), and non-heat conductive, which heat sources and also external 
forces are neglected, that is to say  
\begin{equation}
\label{INCF}
\begin{aligned}
\mathbf{T}&= - p \mathbf{I}, \qquad \mathbf{q}= 0, 
\\[5pt]
\mathbf{b}&= 0, \hspace{40pt} r= 0, 
\end{aligned}
\end{equation}
where $p$ is the pressure. 
Then, the system \eqref{GBCL} under assumptions \eqref{INCF}
turns into the so called (compressible) Euler equations
\begin{equation}
\label{EulerEq}
\begin{aligned}
   &\partial_t \rho +  \mathbf{u} \cdot \nabla \rho + \rho \; {\rm div} \mathbf{u}= 0,
\\[5pt]
   &\rho \, \big( \partial_t \mathbf{u} + (\mathbf{u} \cdot \nabla) \mathbf{u} \big) + \nabla p= 0, 
\\[5pt]
   &\rho \, \big( \partial_t e + \mathbf{u} \cdot \nabla e \big)
   + p \; {\rm div} \mathbf{u}= 0,
\end{aligned} 
\end{equation}
where we have used elementary manipulation (valid for smooth solutions),
and the last equation in \eqref{EulerEq} is usually called 
balance of internal energy.  

\medskip
One remarks that, the system \eqref{EulerEq}  has too 
few equations for too many unknowns, hence in order to close it suitable 
equations of state are needed. For instance, we may specify the pressure 
$p= p(\rho, \theta)$ and the internal energy $e= e(\rho, \theta)$,
where $\theta> 0$ is the thermodynamic temperature. Moreover, 
for an elastic fluid, the following Gibbs relation must be satisfied, that is, 
\begin{equation}
\label{GR}
 d(\rho \, s)= \frac{1}{\theta} \big(d(\rho \, e) - h \, d \rho \big),
\end{equation}
where $s$ is the (specific) entropy, and $h(\rho, \theta)$ is the enthalpy.  
{In addition, we impose the hypothesis of thermodynamic stability}
\begin{equation} 
\label{ThSt}
	\frac{\partial p(\rho, \theta)}{\partial \rho} > 0,\ \frac{\partial e(\rho, \theta)}{\partial \theta} > 0. 
\end{equation} 
Then, from \eqref{EulerEq} and \eqref{GR} we may write the Euler 
equations in a more convenient form
\begin{equation}
\label{EulerEqEnt}
\begin{aligned}
   &\partial_t \rho +  \mathbf{u} \cdot \nabla \rho + \rho \; {\rm div} \mathbf{u}= 0,
\\[5pt]
   &\rho \, \big( \partial_t \mathbf{u} + (\mathbf{u} \cdot \nabla) \mathbf{u}  + \nabla h - \theta \, \nabla s\big) = 0, 
\\[5pt]
   &\rho \, \theta \, \big( \partial_t s + \mathbf{u} \cdot \nabla s \big)= 0. 
\end{aligned} 
\end{equation}

\bigskip
The variables in the Euler equations can be described 
in terms of $t$ and the material 
point $\mathbf{a}$, namely Lagrangian formulation, see P. Constantin \cite{Cont}
(also \cite{Pooley}) in case of incompressible Euler equations. 
More precisely, the Lagrangian 
variables are path mappings $\mathbf{a} \mapsto \mathbf{X}_t(\mathbf{a})$, and the connection 
between the Eulerian description and the Lagrangian one is given when 
$\mathbf{X}_t$, called flow,
is the solution for each $t \in (0,T)$, of the following
system of differential equations
\begin{equation}
\label{CPODE}
   \left \{
   \begin{aligned}
   &\frac{d}{dt} \mathbf{X}_t(\mathbf{a})= \mathbf{u}(t, \mathbf{x}), \quad \mathbf{x}= \mathbf{X}_t(\mathbf{a}), 
   \\[5pt]
   & \mathbf{X}_0(\mathbf{a})= \mathbf{a}.
   \end{aligned}
      \right.
\end{equation}
One recalls that, when $\mathbf{x}= \mathbf{X}_t(\mathbf{a})$ 
we have for $\phi$ be a scalar field 
$$
   D_t \phi(t, \mathbf{X}_t(\mathbf{a}))= \big(\partial_t \phi \big) (t, \mathbf{X}_t(\mathbf{a})) 
   + \big(\nabla_{\!\mathbf{x}} \phi \big) (t, \mathbf{X}_t(\mathbf{a}))  \cdot \mathbf{u}(t, \mathbf{X}_t(\mathbf{a})), 
$$
and $\mathbf{w}$ be a vector field 
$$
   D_t \mathbf{w}(t, \mathbf{X}_t(\mathbf{a}))= \big(\partial_t \mathbf{w}\big) (t, \mathbf{X}_t(\mathbf{a})) 
   + \big(\nabla_{\!\mathbf{x}} \mathbf{w}\big) (t, \mathbf{X}_t(\mathbf{a}))  \, \mathbf{u}(t, \mathbf{X}_t(\mathbf{a})). 
$$
Similarly, it follows that 
$$
\begin{aligned}
   \nabla_{\! \mathbf{a}} \phi(t, \mathbf{X}_t(\mathbf{a}))&
   =  \big(\nabla_{\! \mathbf{a}} \mathbf{X}_t(\mathbf{a})\big)^\ast \big(\nabla \phi \big)(t, \mathbf{X}_t(\mathbf{a})), 
   \\[5pt]
   \nabla_{\! \mathbf{a}} \mathbf{w}(t, \mathbf{X}_t(\mathbf{a}))&
   =  \big(\nabla \mathbf{w}\big)(t, \mathbf{X}_t(\mathbf{a})) \, \nabla_{\! \mathbf{a}} \mathbf{X}_t(\mathbf{a}).
\end{aligned}
$$
We observe that, $M^\ast$ denotes the adjoint of a matrix $M$,  
and $D_t f$ is usually called material time derivative of $f$, 
which is the time derivative of $f$ following the path 
of the material point. 
Moreover, it is not difficult to show that 
$$
\begin{aligned}
D_t (f \, g)&=  D_t f \; g + f \; D_t g,
\\[5pt]
D_t(\nabla f)&= \nabla D_t f - (\nabla \mathbf{u})^\ast \nabla f. 
\end{aligned}
$$
Therefore, the Euler equations can be written as (since $\theta> 0$)
\begin{eqnarray}
\label{BMASS}
   D_t \rho +  \rho  \, {\rm div} \mathbf{u}&=& 0,
   \\[5pt]
\label{BLM}
 \rho \, \big(D_t \mathbf{u} + \nabla h - \theta \nabla s \big)&=& 0, 
   \\[5pt]
\label{BLH}
\rho \, \big(D_t s\big)&=& 0. 
\end{eqnarray}

Let us analyse equation \eqref{BLM}. If $\rho$ is uniformly positive, one can divide by $\rho$ to obtain
\begin{equation}
\label{BLM1}
   D_t \mathbf{u} + \nabla h - \theta \nabla s= 0.
\end{equation}
However, when $\rho = 0$, the equation \eqref{BLM} degenerates, that is to say,  the linear momentum equation becomes simply $0 = 0$, hence at such vacuum points one could, in principle, have either
\begin{equation*}
  D_t \mathbf{u} + \nabla h - \theta \nabla s = 0, \quad \text{or} \quad D_t \mathbf{u} + \nabla h - \theta \nabla s \neq 0.
\end{equation*}
Formally both satisfy equation \eqref{BLM}, but only the former is compatible with smootheness assumption.
Indeed, the product
\begin{equation*}
\big(\rho \big) \,   \big(D_t \mathbf{u} + \nabla h - \theta \nabla s \big)
\end{equation*}
is a smooth function vanishing identically. If the first factor ($\rho$) vanishes on a nontrivial (open) set, then
smoothness forces the other factor to vanish there as well. Otherwise, the product would fail to 
be differentiable across the vacuum boundary.
Thus, for regular solutions we must have the, usually called, compatibility condition
$$
 D_t \mathbf{u} + \nabla h - \theta \nabla s = 0, \quad \text{where $\rho= 0$}.
$$

\medskip
An analogous argument applies to equation \eqref{BLH}, which 
expresses the fact that the changes of states are adiabatic
(absence of heat conducting between different 
parts of the fluid),
that is, the entropy is constant following each fluid particle (Lagrangian formulation) but not 
necessarily through the fluid (Eulerian formulation).  
Although, if (as usually happens) the entropy is constant throughout the volume of the 
fluid at some initial instant, then it remais the same constant value everywhere at each 
time and any subsequence motion of the fluid. Therefore, we may assume that the entropy 
$s= const$, and we do so in what follows, which means that the motion is 
isentropic. On the other hand, if a solution of the full Euler equations
experiences a discontinuity, 
this assumption cannot be assumed. 

\medskip
Consequently,  the Euler equations reduce to consider the conservation of mass, 
and the linear momentum equations, where the enthalpy depends just on the density 
$\rho$ (notice $h(0)= 0$). 
Then, we obtain the following system
\begin{equation}
\label{EulerEqEntIsent}
\left\{
\begin{aligned}
   &\partial_t \rho + (\mathbf{u} \cdot \nabla) \rho + \rho \; {\rm div} \mathbf{u}= 0,
\\[5pt]
   & \partial_t \mathbf{u} + (\mathbf{u} \cdot \nabla) \mathbf{u}  + \nabla h(\rho)= 0, 
\end{aligned} 
\right .
\end{equation}
which should be supplemented with the initial data $(\rho_0, \mathbf{u}_0)$ given in the appropriate spaces. 
    
\begin{remark}
\label{RemVacumm}
1. Physically, the isentropic Euler equations should reduce to the pressureless transport equation in vacuum regions $(\rho = 0)$, 
as the absence of mass precludes any pressure-driven acceleration, that is, we should have the purely inertial dynamics
\begin{equation}
\label{EulerEqEntIsentVacuum}
    \partial_t \mathbf{u} + (\mathbf{u} \cdot \nabla) \mathbf{u}= 0, \quad \text{where $\rho= 0$}.
\end{equation}

2. Mathematically, however, the classical formulation \eqref{EulerEqEntIsent} relies on the regularity of 
the enthalpy $h(\rho)$ at the origin, which fails for the polytropic case $1 < \gamma < 2$ since $h'(\rho)$ diverges as $\rho \to 0$.
Our construction of the weight $\mathcal{W}^\varepsilon$ $($see the proof of Theorem \ref{maintheorem} $)$ 
effectively decouples the momentum balance from the singular behavior of the enthalpy in the low-density limit. 
By ensuring that $\mathcal{W}^\varepsilon \to 0$ faster than any algebraic blow-up of $\nabla h$, we recover  
\eqref{EulerEqEntIsentVacuum}, making the existence of regular solutions independent of the specific singularity of the equation of state.
\end{remark}  

\section {Notation and background}

In this section, we fix the notations and 
gather the preliminary results necessary 
to show the short-time solution to the compressible Euler system  \eqref{EulerEqEntIsent}
using the Lagrangian formulation developed in this paper.

\medskip
First, let $U \subset \R^d$ be an open set. 
We denote by $dx, d\xi$, etc. 
the Lebesgue measure on $U$ and by $L^p(U)$, $p \in [1,+\infty)$, the set of (real or complex) $p$-summable functions 
with respect to the Lebesgue measure, also $L^\infty(U)$ is 
the set of measurable functions in which its absolute value has the essential supremum finite,
(vector ones should be understood 
componentwise).
As usual, the Schwartz space is denoted by $\mathcal{S}(\R^d)$, and
$\mathcal{S}^{\prime}(\R^d)$ denotes the set of tempered distributions.
We denote by
$\mathcal{F} \varphi(\xi) \equiv \widehat{\varphi}(\xi)$
the Fourier Transform of $\varphi$, which is an
isometry in $L^2(\R^d)$.

\medskip
Observation confirms that the Sobolev spaces $H^s$ (energy of $L^2-$type) represent the convenient and effective class 
of functions for seeking solutions to quasilinear (symmetric) system of conservation laws, which is the 
case of compressible (isentropic) Euler equations when written in the Eulerian formulation as in \eqref{EulerEqEntIsent}.
Although, it seems more interesting to apply the Lagrangian formulation developed 
in the setting of the Bessel space $H_{p}^{\beta}(\Omega)$, which we describe below.
First, we define $H_{p}^{\beta}(\R^d)$, (or simply written $H_{p}^{\beta}$)
via Fourier Transform. Precisely, we have
\begin{equation}
\label{HsDEF}
    H^\beta_p(\R^d):= \Big\{ u \in \mathcal{S}^{\prime}(\R^d) : (1+|\xi|^{2})^{\beta/2} \ \widehat{u}(\xi) \in L^p(\R^d) \Big\}
\end{equation}
and we observe that the above definition is valid for any $p \in (1,\infty)$, and $\beta \in \R$. Moreover, 
$H^\beta_2= H^\beta$ is a Hilbert space with the scalar product
$$
   (u,v)_{H^{\beta}} = \int_{\R^{d}} (1+|\xi|^{2})^{\beta} \ \widehat{u}(\xi) \ \overline{\widehat{v}}(\xi) \ d\xi,
$$
and we may introduce a norm in $H^\beta_p$ by means of
$$
   \|u\|_{H^\beta_p}= \| (I - \Delta)^{\frac{\beta}{2}} u \|_{L^p}.  
$$
Vector fields $\mathbf{u}$ in $H^\beta_p$ should be understood 
componentwise. 

\medskip
Similarly, we may consider the Bessel space on torus. 
Let $D(\mathbb{T}^{d})$ be the collection of all infinitely differentiable functions on
$\mathbb{T}^{d}$. Then $D^{\prime}(\mathbb{T}^{d})$ stands for the topological dual of
$D(\mathbb{T}^{d})$. We denote the Fourier coefficients of $u\in D^{\prime}(\mathbb{T}^{d})$ 
by $\hat{u}(k):= \frac{1}{(2\pi)^{d/2}} u(e^{2i\pi \langle k,\cdot\rangle})$. 
As there should be no risk of confusion, we will use the same notations for the Bessel norm and Bessel operator on the torus as 
in the whole space. Hence for any $\beta\in \mathbb{R}$, we define the Bessel potential operator $(I-\Delta)^{\beta/2}$ applied to $u\in D'(\mathbb{T}^{d})$ by
\begin{equation*}
(I-\Delta)^{\frac{\beta}{2}} u(x)  =  \frac{1}{(2\pi)^{d/2}} \sum_{k\in \mathbb{Z}} (1+|k|^2)^{\frac{\beta}{2}} \, \hat{u}(k)\, e^{-2i\pi \langle k,x\rangle},
\end{equation*}
and denote its norm by
\begin{align}\label{eq:Besselnorm}
\|u\|_{\beta,p} =  \left\| (I-\Delta)^{\frac{\beta}{2}} u\right\|_{L^p(\mathbb{T}^d)}.
\end{align}
As in \cite{SchmeisserTriebel} , $H_{p}^{\beta}(\mathbb{T}^{d})$ is
defined for $p\in\left(1,\infty\right)$ and $\beta\in \mathbb R$ by
\begin{align*}
H_{p}^{\beta}(\mathbb{T}^{d}):= \Big\{ u \in D^{\prime}(\mathbb T^d) ;\, \|u\|_{\beta,p}<\infty \Big\}.
\end{align*}

\section{Eulerian-Lagrangian Equivalent Formulations}

To begin the study of the Eulerian-Lagrangian equivalent formulations for compressible Euler equations, let us consider 
companion to system \eqref{CPODE} the so-called the back-to-labels map $\mathbf{A}$, defined by
$$
   \mathbf{A}_t(\cdot):= \mathbf{X}_t^{-1}(\cdot), \quad \text{that is} \quad a^i= A^i_t(\mathbf{X}_t(\mathbf{a})),
$$
which must satisfy the active vector equation, more precisely, given the vector field $\mathbf{u}$ the back-to-labels map is the solution of the
following Cauchy problem
\begin{equation}
\label{back-to-labels}
\begin{aligned}
D_t \mathbf{A}_t(\mathbf{x})\equiv  \partial_t \mathbf{A}_t(\mathbf{x}) + (\mathbf{u} \cdot \nabla) \mathbf{A}_t(\mathbf{x})&= 0, 
\\
\mathbf{A}_0(\mathbf{x})&= \mathbf{x}. 
\end{aligned}
\end{equation}
Then, we introduce the virtual velocity field $\mathbf{v}$ obtained by the solution of the Cauchy problem 
\begin{equation}
\label{virtualvelocity}
\begin{aligned}
D_t \mathbf{v}(t,\mathbf{x})\equiv \partial_t \mathbf{v}(t,\mathbf{x}) + (\mathbf{u} \cdot \nabla) \mathbf{v}(t,\mathbf{x})&= 0,
\\
\mathbf{v}(0,\mathbf{x})&= \mathbf{u}_0(\mathbf{x}), 
\end{aligned}
\end{equation}
which solution is given by $\mathbf{v}(t,\mathbf{x})= \mathbf{u}_0(\mathbf{A}_t(\mathbf{x}))$.

\medskip
To follow, we observe that the first equation in \eqref{EulerEqEntIsent} corresponds to the continuity equation written in Eulerian coordinates, 
which could be also written in Lagrangian coordinates as 
\begin{equation}
\label{DENSLAG}
\begin{aligned}
\rho(t,\mathbf{x})&= J \mathbf{A}_t(\mathbf{x}) \, \rho_0(\mathbf{A}_t(\mathbf{x}) ), 
\\[5pt]
\rho(0,\mathbf{x})&= \rho_0(\mathbf{x}), 
\end{aligned}
\end{equation}
where $J \mathbf{A}_t(\mathbf{x})$ is the Jacobian of the back-to-labels map. Similarly, we may write the continuity equation in integral form, that is, 
\begin{equation}
\label{DENSLAGINT}
\rho(t,\mathbf{x})= \rho_0(\mathbf{A}_t(\mathbf{x}) ) - \int_0^t (\rho \, {\rm div} \mathbf{u})(s,\mathbf{A}_{t-s}(\mathbf{x})) \ ds. 
\end{equation}

\medskip
Finally, the formulation comprises the following Cauchy problem
\begin{equation}
\label{calculus}
\begin{aligned}
D_t q(t,\mathbf{x}) \equiv  \partial_t q(t,\mathbf{x}) + (\mathbf{u} \cdot \nabla) q(t,\mathbf{x})&= H(t,\mathbf{x}) - K(t,\mathbf{x}),
\\
q(0,x)&= 0,
\end{aligned}
\end{equation}
where we write, conveniently, $H(t,\mathbf{x})= h(\rho(t,\mathbf{x}))$ and $K(t,\mathbf{x})= \frac{1}{2} |\mathbf{u}(t,\mathbf{x})|^2$. 

\bigskip
Since the continuity equation has already been formulated in both the Eulerian and Lagrangian descriptions, 
it remains to study the linear momentum equation, which is done in the following two sections. 

\subsection {From Eulerian to Lagrangian}

Let $\mathbf{u}(t,\mathbf{x})$ 
be the velocity vector field, due to \eqref{CPODE} we may write  
$$\tilde{\mathbf{u}}(t,\mathbf{a})= \mathbf{u}(t,\mathbf{X}_t(\mathbf{a}))$$ 
called the Lagrangian velocity of a fluid element, which is given for each component by 
$$
   \tilde{u}^i(t, \mathbf{a})= \frac{\partial X^i}{\partial t} (t, \mathbf{a}). 
$$
Hence we have
$$
\begin{aligned}
  \frac{\partial \tilde{u}^i}{\partial t}(t, \mathbf{a})\equiv \frac{\partial^2 \! X^i}{\partial t^2} (t, \mathbf{a})&= - \frac{\partial}{\partial x^i}\big(H(t, \mathbf{X}_t(\mathbf{a}))\big)
  \\[5pt] 
  &= - \frac{\partial A^j}{\partial x^i} \frac{\partial \tilde{H}}{\partial a^j}(t, \mathbf{a}), 
 \end{aligned}  
$$
where $\tilde{H}(t,\mathbf{a})= H(t, \mathbf{X}_t(\mathbf{a}))$, and we have used the second equation in \eqref{EulerEqEntIsent}. 
Premultiplying by $(\nabla \mathbf{X}_t)^\ast$, we obtain 
\begin{equation}
\label{Eq10}
\begin{aligned}
  \frac{\partial^2 \! X^i}{\partial t^2} (t, \mathbf{a})  \frac{\partial \! X^i}{\partial a^j} (t, \mathbf{a})= 
  - \frac{\partial \tilde{H}}{\partial a^j}(t, \mathbf{a}).
 \end{aligned}  
\end{equation}

\smallskip
Now, a straightforward calculation shows that
\begin{equation}
\label{Eq20}
\begin{aligned}
    \frac{\partial^2 \! X^i}{\partial t^2} (t, \mathbf{a})  \frac{\partial \! X^i}{\partial a^j} (t, \mathbf{a})= 
    \frac{\partial}{\partial t} \Big( \frac{\partial X^i_t}{\partial t}   \frac{\partial X^i_t}{\partial a^j}  \Big) (t, \mathbf{a})
- \frac{1}{2}  \frac{\partial}{\partial a^j} \Big|  \frac{\partial X^i}{\partial t} (t, \mathbf{a}) \Big|^2.
\end{aligned}
\end{equation}
Then, it follows from \eqref{Eq10}, \eqref{Eq20} that 
$$
\begin{aligned}
   \frac{\partial}{\partial t} \Big( \frac{\partial X^i_t}{\partial t}(t, \mathbf{a})   \frac{\partial X^i_t}{\partial a^j}(t, \mathbf{a})  \Big)
   &= - \frac{\partial}{\partial a^j} \Big( \tilde{H}(t, \mathbf{a}) - \tilde{K}(t, \mathbf{a}) \Big), 
\end{aligned}
$$
with obvious notation. Integrating from $0$ to $t$ and premultiplying by $(\nabla \mathbf{A}_t)^\ast$, we have 
$$
\begin{aligned}
    \tilde{u}^i(t, \mathbf{a})&= u_0^j(\mathbf{a})  \, \frac{\partial A^j_t}{\partial x^i} 
   - \int_0^t  \frac{\partial A^j_t}{\partial x^i} \frac{\partial}{\partial a^j} \Big( \tilde{H}(s, \mathbf{a}) - \tilde{K}(s, \mathbf{a}) \Big) \, ds,
\end{aligned}
$$
which can be written as 
$$
\begin{aligned}
    u^i(t,\mathbf{X}_t(\mathbf{a}))&= u_0^j(\mathbf{a})  \, \frac{\partial A^j_t}{\partial x^i} 
   - \int_0^t  \frac{\partial A^j_{t-s}}{\partial x^i} \frac{\partial}{\partial x^j} \Big( H(s,\mathbf{X}_s(\mathbf{a})) - K(s,\mathbf{X}_s(\mathbf{a})) \Big) \, ds.
\end{aligned}
$$
Finally, making $\mathbf{a}= \mathbf{A}_t(\mathbf{x})$ we obtain 
$$
\begin{aligned}
\mathbf{u}(t, \mathbf{x})&= (\nabla \! \mathbf{A}_t)^\ast \mathbf{u}_0(\mathbf{A}_t(\mathbf{x})) 
- \int_0^t  (\nabla \! \mathbf{A}_{t-s})^\ast \,  (\nabla H)(s,\mathbf{A}_{t-s}(\mathbf{x})) \ ds 
\\[5pt]
&+ \int_0^t  (\nabla \! \mathbf{A}_{t-s})^\ast \,  (\nabla K)(s,\mathbf{A}_{t-s}(\mathbf{x})) \ ds
\\[5pt]
&= (\nabla \! \mathbf{A}_t)^\ast \mathbf{u}_0(\mathbf{A}_t(\mathbf{x})) 
\\[5pt]
&-  \int_0^t \nabla H(s,\mathbf{A}_{t-s}(\mathbf{x})) \ ds + \int_0^t  \nabla K(s,\mathbf{A}_{t-s}(\mathbf{x})) \ ds, 
\end{aligned} 
$$
which is to say 
\begin{equation}
\label{Velocity}
\mathbf{u}= (\nabla \! \mathbf{A}_t)^\ast \mathbf{v} - \nabla q. 
\end{equation}

Therefore, we have used the Eulerian description of the velocity vector field given by $(\ref{EulerEqEntIsent})_2$ 
and obtained the Lagrangian description of the velocity given by \eqref{Velocity}. Equations \eqref{back-to-labels}--\eqref{calculus}
are used implicitly. 

\subsection {From Lagrangian to Eulerian}
 
 Now, we take the velocity vector field given by the Lagrangian description, that is equation \eqref{Velocity}. 
 Then computing the material derivative of it , and using equations \eqref{back-to-labels}--\eqref{calculus}, 
 we have
 $$
 \begin{aligned}
D_t \mathbf{u}&= D_t \big( (\nabla \! \mathbf{A}_t)^\ast \mathbf{v} \big) - D_t\big( \nabla q \big)
   \\[5pt]
   &= D_t \big( (\nabla \! \mathbf{A}_t)^\ast \big) \mathbf{v} + (\nabla \! \mathbf{A}_t)^\ast D_t \mathbf{v} 
   - \nabla \big(D_t q \big) + (\nabla \mathbf{u})^\ast \nabla q
  \\[5pt]
   &= (\nabla \! D_t \mathbf{A}_t)^\ast \mathbf{v} - (\nabla \mathbf{u})^\ast (\nabla \mathbf{A}_t)^\ast \mathbf{v} 
   - \nabla \big(D_t q \big) + (\nabla \mathbf{u})^\ast \nabla q
    \\[5pt]
   &= - (\nabla \mathbf{u})^\ast \mathbf{u} - (\nabla \mathbf{u})^\ast \nabla q 
   - \nabla \big(D_t q \big) + (\nabla \mathbf{u})^\ast \nabla q
    \\[5pt]
   &= - \nabla K - \nabla H + \nabla K= - \nabla h(\rho), 
   \end{aligned}
 $$
 hence we obtain the second equation in \eqref{EulerEqEntIsent}.

\subsection {Lagrangian Flow Map Framework}
\label{Lagrangian formulation}

The previous calculations lead to the conclusion that, given
$(\rho_0, \mathbf{u}_0)$ all the equations \eqref{back-to-labels}--\eqref{calculus}
depends on $\rho, \mathbf{u}$, which are given, respectvely, by equations \eqref{DENSLAGINT} and \eqref{Velocity}. Therefore, 
we have proved the following important
\begin{theorem}
\label{EqFormulations}
A pair of functions $(\rho, \mathbf{u})$ satisfies the system
\eqref{EulerEqEntIsent} with initial data $(\rho_0, \mathbf{u}_0)$ if, and only if, the equations  \eqref{DENSLAGINT}, \eqref{Velocity} are satisfied,
more precisely
$$
\begin{aligned}
&\rho(t,\mathbf{x}) - \rho_0(\mathbf{A}_t(\mathbf{x}) ) + \int_0^t (\rho \, {\rm div} \mathbf{u})(s,\mathbf{A}_{t-s}(\mathbf{x})) \ ds= 0,
\\[5pt]
&\mathbf{u}(t, \mathbf{x}) -(\nabla \! \mathbf{A}_t)^\ast \mathbf{u}_0(\mathbf{A}_t(\mathbf{x})) 
+\!\! \int_0^t \!\! \nabla (h( \rho(s, \mathbf{A}_{t-s}(\mathbf{x}))) - \frac{|\mathbf{u}(s,\mathbf{A}_{t-s}(\mathbf{x}))|^2}{2}) ds= 0, 
\end{aligned}
$$
where the back-to-labels map $\mathbf{A}$ is given by \eqref{back-to-labels}.
\end{theorem}

This is the new framework we use in this paper to show the existence of local regular solutions to compressible 
Euler Equations.

\section{Regular Solutions: Support Invariance}

This section beggins with the definition of regular solutions, 
that is, following Makino, Ukai, and Kawashima \cite{Makino}, we present 
what is meant to be a local smooth solution of the isentropic 
compressible Euler equations with vacuum.  

\begin{definition}[Regular Solution]
\label{RegSol}
Let $\beta>\frac{d}{p}+1$ with $p \in (1,\infty)$ be a real number, 
$h \in C^2(0,\infty)$, and consider the initial conditions 
$\rho_0 \in H_p^{\beta}(\Omega)$ of compact support, and $\mathbf{u}_0 \in H_p^{\beta}(\Omega)$.
Let $T> 0$ be a finite constant, hence a pair of functions
$$
  (\rho, \mathbf{u}) \in \Big(C([0,T]; H_p^\beta(\Omega)) \cap C^1([0,T]; H_p^{\beta-1}(\Omega)) \Big)^2
$$
satisfying \eqref{EulerEqEntIsent} where $\{\rho> 0\}$  $($respectively \eqref{EulerEqEntIsentVacuum} where $\{\rho= 0\}$$)$ with initial data 
$(\rho_0, \mathbf{u}_0)$ is called a regular solution.  
\end{definition}

\begin{remark}
In the previous definition, if $\Omega= \mathbb T^d$, then we assume that ${\rm spt} \rho_0\neq \mathbb T^d$.
Moreover, compare it with the one on page 253 of \cite{Makino}. 
In particular, we note item (ii) in that definition, which is a technical condition arising from the symmetrization employed there.
\end{remark}

\begin{remark}
The regularity class specified in Definition \ref{RegSol} ensures that the solutions are classical in the sense that they 
satisfy the equations point-wise. Indeed, since we assume $\beta > \frac{d}{p} + 1$, the Sobolev Embedding Theorem for the Bessel Spaces $H_p^\beta(\Omega)$ implies
$$
    H_p^\beta(\Omega) \hookrightarrow C^{1, \alpha}(\Omega), \quad \text{where } \alpha = \beta - 1 - \frac{d}{p} > 0.
$$
Consequently, the spatial derivatives $\nabla \rho$ and $\nabla \mathbf{u}$ are continuous. Furthermore, 
the condition $(\rho, \mathbf{u}) \in C^1([0,T]; H_p^{\beta-1}(\Omega))$ combined with the embedding $H_p^{\beta-1}(\Omega) \hookrightarrow C^0(\Omega)$ 
guarantees that the time derivatives $\partial_t \rho$ and $\partial_t \mathbf{u}$ are also continuous. Thus, we have the following inclusion
\begin{equation}
    \left( C([0,T]; H_p^\beta(\Omega)) \cap C^1([0,T]; H_p^{\beta-1}(\Omega)) \right)^2 \subset C^1([0,T] \times \Omega)^2.
\end{equation}
The term regular solution is preferred over classical solution in this context because it emphasizes that the 
solution persists in the same functional space as the initial data. While a classical solution only requires 
the continuity of derivatives, a regular solution maintains the precise $H_p^\beta$ regularity, 
which is essential for the stability estimates and the local Lipschitz continuity of the data-to-solution map.
\end{remark}

Then, we have the following 

\begin{proposition}[Invariance of Support]
\label{Invariance}
Let $(\rho, \mathbf{u})$ be a regular solution. 
Then, for each $t \in [0,T]$, the support of the solution is transported by the flow $\mathbf{X}_t$, such that, 
\[
{\rm spt}(\rho(t), \mathbf{u}(t)) = \mathbf{X}_t({\rm spt}(\rho_0, \mathbf{u_0})).
\]
In particular, the topological properties of the support are invariant.
\end{proposition}

\begin{proof}
1. Since $\beta>\frac{d}{p}+1$ with $p \in (1,\infty)$, it follows from the Sobolev Embedding Theorem that, 
$H_p^{\beta}(\Omega) \hookrightarrow C^1_0(\Omega)$. Therefore, since $\mathbf{u} \in C([0,T]; H_p^\beta(\Omega))$,
there exists a constant $M> 0$, such that, for each $t \in [0,T]$
$$
    \|\nabla \mathbf{u}(t,\cdot)\|_{L^\infty(\Omega)}\le M.
$$
Let $\mathbf{X}_t(\mathbf{a})$ be given by \eqref{CPODE}, 
and $J\mathbf{X}_t(\mathbf{a})$ the associated Jacobian which satisfies the Liouville equation, that is to say,  
\begin{equation}
\label{LiouvilleEq}
   \left \{
   \begin{aligned}
   &\frac{d}{dt} J \mathbf{X}_t(\mathbf{a})= {\rm div} \mathbf{u}(t, \mathbf{X}_t(\mathbf{a})) \, J \mathbf{X}_t(\mathbf{a}),
   \\[5pt]
   & J\mathbf{X}_0(\mathbf{a})= 1.
   \end{aligned}
      \right.
\end{equation}
Therefore, we obtain 
\[
   J\mathbf{X}_t(\mathbf{a})=\exp\!\left(\int_0^t {\rm div} \mathbf{u}(s, \mathbf{X}_s(\mathbf{a})) \big)\,ds\right),
\]
and hence we have for all $(t,\mathbf{a}) \in [0,T] \times \Omega$, 
\begin{equation}
\label{LimUnifJac}
e^{-Mt} \le J\mathbf{X}_t(\mathbf{a}) \le e^{Mt}. 
\end{equation}

\medskip
Now, we recall the continuity equation written in Lagrangian coordinates, that is, 
$$
  \rho\big(t,\mathbf{X}_t(\mathbf{a})\big) \, J\mathbf{X}_t(\mathbf{a})= \rho_0(\mathbf{a})
$$
from which and \eqref{LimUnifJac}, it follows that: 
\begin{itemize}
\item[i)] If $\rho_0(\mathbf{a})= 0$, then $\rho(t,\mathbf{X}_t(\mathbf{a}))= 0$ for all $t \in [0,T]$, that is, the vacuum is transported.

\item[ii)] If $\rho_0(\mathbf{a})> 0$, then for each $t \in [0,T]$, 
\[
\rho\big(t,\mathbf{X}_t(\mathbf{a})\big)= \frac{\rho_0(\mathbf{a})}{J\mathbf{X}_t(\mathbf{a})}\ \ge\ \rho_0(\mathbf{a})\,e^{-Mt}\ > 0,
\]
that is, the gas does not become vacuum in finite time.
\end{itemize}
Moreover, since $\mathbf{X}_t(\cdot)$ is a diffeomorphism of $\Omega$ (of class $C^1$ with strictly positive Jacobian), we have 
\begin{equation}
\label{SuppRho}
\{\rho(\cdot,t)>0\}= \mathbf{X}_t\big(\{\rho_0>0\}\big).
\end{equation}

\medskip
2. Finally, from the definition of regular solution and \eqref{SuppRho}, it follows that 
\begin{equation}
\label{UniqSol}
   \partial_t \mathbf{u} + (\mathbf{u} \cdot \nabla) \mathbf{u}= 0 \quad \text{as} \quad \rho_0= 0, 
\end{equation}
this concludes the proof
\end{proof}

\begin{corollary}
Under conditions of Definition \ref{RegSol}, equation \eqref{UniqSol} guarantees uniqueness of the regular solution, 
and ensures that the velocity vector field $\mathbf{u}$ is well defined in vaccuum regions. 
\end{corollary}

\section{Application: Existence of regular solution}
\label{mainexistence}

Next, we present the well-posedness theorem for regular solutions.  

\begin{theorem}
\label{maintheorem}
Let $\beta>\frac{d}{p}+1$ with $p \in (1,\infty)$ be a real number, 
$h \in C^2(0,\infty)$, and consider the initial conditions 
$\rho_0 \in H_p^{\beta}(\Omega)$ of compact support, $($in case $\Omega= \mathbb T^d$, ${\rm spt} \rho_0\neq \mathbb T^d)$, and $\mathbf{u}_0 \in H_p^{\beta}(\Omega)$.
Then, there exists a regular (short-time) solution in the sense of Definition \ref{RegSol}. 
Moreover, the solution depends continuously on the initial data; more precisely,
the data-to-solution map
\[
(\rho_0,\mathbf u_0)\ \longmapsto\ (\rho,\mathbf u)\in C([0,T];H_p^\beta(\Omega))^2
\]
is locally Lipschitz from $H_p^\beta(\Omega)\times H_p^\beta(\Omega)$ into $C([0,T];H_p^\beta(\Omega))^2.$
\end{theorem}

\begin{proof}
1. First, we fix 
$
\beta>\frac{d}{p}+1,
$
so that $H_p^\beta(\Omega)$ is an algebra and embeds into 
$C^{1,\alpha}$, with $\alpha=\beta-1-\frac{d}{p}> 0$.
Then, we define the scale of Bessel spaces on trajectories, that is to say,
\[
   \mathcal{X}_m := C\big([0,T];H_p^{\beta+m}(\Omega)\big)\times
                    C\big([0,T];H_p^{\beta+m}(\Omega)\big),
   \qquad (m\in\mathbb{N}).
\]
Given a trajectory $\mathbf{U}=(\rho,\mathbf{u})\in \mathcal{X}_m$, we define the following norm 
\[
   \|(\rho,\mathbf{u})\|_{\mathcal{X}_m}
   := \sup_{t\in[0,T]}\Big(\|\rho(t)\|_{H_p^{\beta+m}} + \|\mathbf{u}(t)\|_{H_p^{\beta+m}}\Big), 
\]
and similarly we consider 
\[
   \mathcal{Y}_m := C\big([0,T];H_p^{\beta+m-1}(\Omega)\big)\times
                    C\big([0,T];H_p^{\beta+m-1}(\Omega)\big). 
\]
Moreover, we consider 
$$
\mathcal{X}_\infty:=\bigcap_{m\ge0}\mathcal{X}_m, 
     \,\, \mathcal{Y}_\infty:=\bigcap_{m\ge0}\mathcal{Y}_m. 
$$

Now, for any $\rho_0 \in H_p^{\beta}(\Omega)$ with compact support and $\mathbf{u}_0 \in H_p^{\beta}(\Omega)$, fixed initial data, 
let us consider the trajectory mapping. 
Since the enthalpy is not assumed, necessarily, smooth up to the vacuum state, 
we introduce, for $\ve \in (0,1]$ fixed, a suitable weight function, defined by
$$
\mathcal{W}^\varepsilon_{t}(\mathbf{x}):= g_\varepsilon(\rho_0(\mathbf{A}_t(\mathbf{x}))),\qquad
g_\varepsilon(r)=
\begin{cases}
e^{-\varepsilon/r}, & r>0,\\
0, & r= 0,
\end{cases}
$$
and thus we have 
$\mathbf{F}_{\rho_0,\mathbf{u}_0} \equiv \mathbf{F}= (F_{cont}, F_{mom}): \mathcal{X}_0 \to \mathcal{Y}_0$, 
given by 
\begin{align*}
&F_{cont}(\rho,\mathbf{u})(t,\mathbf{x})
 = \rho(t,\mathbf{x})
   - \rho_0(\mathbf{A}_t(\mathbf{x})) 
   + \int_0^t (\rho\,{\rm div}\,\mathbf{u})\big(s,\mathbf{A}_{t-s}(\mathbf{x})\big)\, ds,
\\[7pt]
&F_{mom}(\rho,\mathbf{u})(t,\mathbf{x})
= \mathbf{u}(t,\mathbf{x})
   -(\nabla \! \mathbf{A}_t(\mathbf{x}))^\ast \mathbf{u}_0(\mathbf{A}_t(\mathbf{x})) 
\\[5pt]
& \hspace{40pt}
   + \int_0^t \Big( \mathcal{W}^\ve_{t-s}(\mathbf{x}) \, 
     \nabla \big(h\big( \rho\big(s,\mathbf{A}_{t-s}(\mathbf{x})\big)\big) \big)
     - \nabla \big(\tfrac12 
        \big|\mathbf{u}\big(s,\mathbf{A}_{t-s}(\mathbf{x})\big)\big|^2
     \big) \Big)\, ds, 
\end{align*}
where for simplicity we have 
omitted the superscript $\ve$ in $\rho^\ve,\mathbf{u}^\ve$, writing simply $\rho,\mathbf{u}$, 
and $\mathbf{A}_t \equiv \mathbf{A}_t^{\mathbf{u}}$ is the back-to-labels mapping associated with the velocity field $\mathbf{u}$,
that is, the inverse of the flow defined in equation \eqref{CPODE}. Similarly, we observe that, 
$\mathcal{W}^{\ve}_t(\mathbf{x})\equiv \mathcal{W}^{\ve, \mathbf{u}}_t(\mathbf{x})$.

\medskip
2. Now, we consider the concepts of smoothing operators (see Secchi \cite{Secchi}, Definition 2.3),
and tame estimates, that is, an estimate is said to be tame 
when the loss of derivatives is controlled by fixed integers, independent of the 
regularity level under consideration.
These two concepts are fundamental to the Nash-Moser iteration scheme, and they are
applied directly to the trajectory mapping
$ \mathbf{F}$. The main issue is to construct a solution
$(\rho,\mathbf{u})$ on the interval $t\in[0,T]$, such that, $\mathbf{F}(\rho,\mathbf{u}) = 0$,
with
\[
(\rho,\mathbf{u})\in C([0,T];H_p^\beta(\Omega))^2
\quad \text{and} \quad
(\partial_t\rho,\partial_t\mathbf{u})\in C([0,T];H_p^{\beta-1}(\Omega))^2.
\]
Consequently, we choose the minimal tame level
\[
m_0= 0,
\]
that is, all tame estimates will be formulated relative to $\mathcal{X}_0$ and $\mathcal{Y}_0$. Moreover, we introduce a spectral family of smoothing operators,
$(S_\theta)_{\theta\ge1}$,
defined via Fourier truncation in the spatial variable alone and acting componentwise.
For a trajectory $\mathbf{U}(t)=(\rho(t),\mathbf{u}(t))$, we define 
\[
(S_\theta \mathbf{U})(t):=\big(S_\theta\rho(t),\,S_\theta\mathbf{u}(t)\big).
\]
The classical Nash-Moser smoothing properties, for $0\le m\le m'$, read as follows 
\begin{align}
   \|S_\theta \mathbf{U}\|_{\mathcal{X}_{m'}} 
      &\le C\,\theta^{m'-m}\,\|\mathbf{U}\|_{\mathcal{X}_m},
\label{eq:smooth-1}\\
   \|S_\theta \mathbf{U}-\mathbf{U}\|_{\mathcal{X}_{m}} 
      &\le C\,\theta^{m-m'}\,\|\mathbf{U}\|_{\mathcal{X}_{m'}}.
\label{eq:smooth-2}
\end{align}
Analogous properties hold in $\mathcal{Y}_m$.

\medskip
3. Let us compute the explicit linearization of the trajectory mapping $\mathbf{F}$.
The differential of $\mathbf{F}$ at the point $\mathbf{U}=(\rho,\mathbf{u})$, 
applied to the perturbation $(\sigma,\mathbf{w})$, is
\[
   d\mathbf{F}(\mathbf{U})(\sigma,\mathbf{w})
   = \big(dF_{cont}(\mathbf{U})(\sigma,\mathbf{w}),\,
          dF_{mom}(\mathbf{U})(\sigma,\mathbf{w})\big).
\]
The variation of $\mathbf{A}_t$ along $\mathbf{w}$ is given by $\dot{\mathbf{A}}_t=\dot{\mathbf{A}}_t[\mathbf{w}]$, 
which is the solution of the linearized transport equation
\begin{equation}
\label{eq:lin-At}
\partial_t \dot{\mathbf{A}}_t
+ \mathbf{u}\cdot\nabla\dot{\mathbf{A}}_t
+ \mathbf{w}\cdot\nabla\mathbf{A}_t = 0,
\qquad
\dot{\mathbf{A}}_0 = 0.
\end{equation}
Then, it follows from the formula for the derivative of a composition that 
\[
   \frac{d}{d\delta}
   f(\mathbf{A}_t^{\mathbf{u}+\delta \mathbf{w}})\Big|_{\delta= 0}
   = \nabla f(\mathbf{A}_t^{\mathbf{u}})\cdot\dot{\mathbf{A}}_t[\mathbf{w}],
\]
and similarly for $t-s$. Thus we have for $\mathbf{U}=(\rho,\mathbf{u})$ 
fixed and any perturbation $(\sigma,\mathbf{w})$:

\medskip
\noindent{(i) Linearization of $F_{cont}$,}
\begin{align}
dF_{cont}(\mathbf{U})(\sigma,\mathbf{w})(t,\mathbf{x})
&= \sigma(t,\mathbf{x})
   - \nabla\rho_0(\mathbf{A}_t(\mathbf{x}))\cdot\dot{\mathbf{A}}_t[\mathbf{w}](\mathbf{x})
\label{eq:lin-Fcont}
\\
&\quad
   + \int_0^t 
     \Big[
        (\sigma\,{\rm div}\,\mathbf{u}
        +\rho\,{\rm div}\,\mathbf{w})
           \big(s,\mathbf{A}_{t-s}(\mathbf{x})\big)\nonumber
\\
&\qquad\qquad
        + \nabla\big(\rho\,{\rm div}\,\mathbf{u}\big)
           \big(s,\mathbf{A}_{t-s}(\mathbf{x})\big)
          \cdot\dot{\mathbf{A}}_{t-s}[\mathbf{w}](\mathbf{x})
     \Big]\, ds.\nonumber
\end{align}

\medskip
\noindent{(ii) Linearization of $F_{mom}$,}

\begin{align}
dF_{mom}&(\mathbf{U})(\sigma,\mathbf{w})(t,\mathbf{x})= \mathbf{w}(t,\mathbf{x}) 
- \partial_\ell \big(\dot{A}_t^k[\mathbf{w}] \big)(\mathbf{x})\, \mathbf{u}_0^k(\mathbf{A}_t(\mathbf{x})) \label{eq:lin-Fmom} 
\\
& 
    - (\nabla \mathbf{A}_t(\mathbf{x}))^\ast 
    \nabla\mathbf{u}_0(\mathbf{A}_t(\mathbf{x})) \cdot \dot{\mathbf{A}}_t[\mathbf{w}](\mathbf{x}) 
    \nonumber \\
&+ \int_0^t \mathcal{W}^\ve_{t-s}(\mathbf{x}) \, \nabla\Big( h'(\rho)\,\sigma \Big)\big(s,\mathbf{A}_{t-s}(\mathbf{x})\big)\,ds   
 \nonumber \\
&- \int_0^t \nabla\Big( \mathbf{u}\cdot\mathbf{w} \Big)\big(s,\mathbf{A}_{t-s}(\mathbf{x})\big)\,ds  
 \nonumber \\
& + \int_0^t \left[ \mathcal{W}^\ve_{t-s}(\mathbf{x}) D^2 (h(\rho)) - D^2(\tfrac{1}{2}|\mathbf{u}|^2) \right] \big(s,\mathbf{A}_{t-s}(\mathbf{x})\big) \,\dot{\mathbf{A}}_{t-s}[\mathbf{w}](\mathbf{x})\,ds 
\nonumber \\
& + \int_0^t \Big(\nabla(\mathcal{W}^\ve_{t-s}(\mathbf{A}_{t-s}(\mathbf{x}))) \cdot \dot{\mathbf{A}}_{t-s}[\mathbf{w}](\mathbf{x}) \Big)
\nabla \Big( h(\rho) \Big)\big(s,\mathbf{A}_{t-s}(\mathbf{x})\big) \, ds.
\nonumber 
\end{align}
The equations \eqref{eq:lin-Fcont} and \eqref{eq:lin-Fmom} show that, 
$d\mathbf{F}(\mathbf{U})$ is of the form
\[
   d\mathbf{F}(\mathbf{U})(\sigma,\mathbf{w})
   = (\sigma,\mathbf{w}) - \mathcal{K}(\mathbf{U})(\sigma,\mathbf{w}),
\]
where $\mathcal{K}(\mathbf{U})$ combines transport, time integrals, and compositions
with $\mathbf{A}_t$ and $\nabla \mathbf{A}_t$. Therefore, we have for some constant $C> 0$, 
\begin{equation}
\label{eq:dF-loss1}
   \|d\mathbf{F}(\mathbf{U})(\sigma,\mathbf{w})\|_{\mathcal{Y}_m}
   \le C\Big(
      \|(\sigma,\mathbf{w})\|_{\mathcal{X}_m}
      + \|(\sigma,\mathbf{w})\|_{\mathcal{X}_{0}}\;\|\mathbf{U}\|_{\mathcal{X}_{m+1}}\Big), 
\end{equation}
where we have used for any $m\ge 0$, 
the algebra property of $H_p^\gamma$, $\gamma>\frac{d}{p}$,
the composition estimates with $C^{1,\alpha}$ diffeomorphisms,
and the transport estimates for $\mathbf{A}_t$ and $\dot{\mathbf{A}}_t$.

\medskip
4. Next we construct under a small-time assumption, 
the existence of a right inverse
\[
   \Psi(\mathbf{U}):\mathcal{Y}_m\to \mathcal{X}_m,
\qquad
   d\mathbf{F}(\mathbf{U})\,\Psi(\mathbf{U}) = {\rm Id},
\]
satisfying for every $m\ge0$,
$$
   \|\Psi(\mathbf{U})\mathbf{G}\|_{\mathcal{X}_m}
   \le C\Big(
      \|\mathbf{G}\|_{\mathcal{Y}_{m+1}}
      + \|\mathbf{G}\|_{\mathcal{Y}_{0}}\;\|\mathbf{U}\|_{\mathcal{X}_{m+1}}\Big), 
$$
where the loss $\mathcal{Y}_{m+1}\to \mathcal{X}_m$ is exactly one spatial derivative.
Indeed, we have the following claim, 
the proof of which is deferred to the end (see item 11).

\medskip
\underline{Claim:}
Given $M>0$, let $\mathbf U=(\rho,\mathbf u)\in\mathcal X_\infty$ be such that
$\|\mathbf U\|_{\mathcal X_0}\le M$. Then there exists $T>0$, sufficiently small,
depending only on $M$ and on the $H_p^\beta$ norms of $(\rho_0,\mathbf u_0)$,
such that, for every $\mathbf G=(g_{cont},\mathbf g_{mom})\in\mathcal Y_\infty$
there exists a unique $\mathbf V=(\sigma,\mathbf w)\in\mathcal X_\infty$ solving
\[
d\mathbf F(\mathbf U)\mathbf V=\mathbf G.
\]
Moreover, the right inverse $\Psi(\mathbf U):\mathbf G\mapsto \mathbf V$ satisfies,
for every $m\ge0$,
\begin{equation}
\label{eq:estimativa_tame_inversa}
\|\Psi(\mathbf U)\mathbf G\|_{\mathcal X_m}
\le C_m\Big(\|\mathbf G\|_{\mathcal Y_{m+1}}+\|\mathbf G\|_{\mathcal Y_0}\,\|\mathbf U\|_{\mathcal X_{m+1}}\Big),
\end{equation}
where $C_m$ depends only on $(m,p,d,\beta,M)$.

\medskip
5. At this point, we establish the Nash-Moser iteration for $\mathbf{F}$. We search 
$\mathbf{U}=(\rho,\mathbf{u})\in\mathcal{X}_\infty$, such that 
\[
\mathbf{F}(\mathbf{U})=0 \quad \text{em } [0,T].
\]
We choose an initial approximation $\mathbf{U}_0 = (\rho^{(0)},\mathbf{u}^{(0)})\in \mathcal{X}_\infty$, 
for example, a smoothed time-extension of the initial data,
and we define the initial residual
\[
   \mathbf{R}_0 = \mathbf{F}(\mathbf{U}_0)\in \mathcal{Y}_\infty.
\]
Then we establishe the inductive step. Let 
$\mathbf{U}_0,\dots,\mathbf{U}_n$ be constructed, and the residuals
$\mathbf{R}_k=\mathbf{F}(\mathbf{U}_k)$. Thus we choose an 
increasing sequence of smoothing parameters, for example, 
\[
   \theta_n:= 2^n, \quad \text{(or alternatively, $\theta_n := (\theta_0^2 + n)^{1/2},\quad \theta_0\gg1$)}. 
\]
At step $n$, we proceed in three stages:

Stage $(i)$. Smoothing of the state and of the residual.
We define, 
\[
   \mathbf{U}_n^\sharp := S_{\theta_n}\mathbf{U}_n,
   \qquad
   \mathbf{R}_n^\sharp := S_{\theta_n}\mathbf{R}_n,
\]
where the standard smoothing operator is defined analogously to the one considered in \cite{Secchi}, see Definition 2.3. 

Stage $(ii)$. Linearized correction. Then, we solve the linear problem in trajectories
\begin{equation}
\label{eq:lin-step}
   d\mathbf{F}(\mathbf{U}_n^\sharp)\,\delta\mathbf{U}_n
   = - \mathbf{R}_n^\sharp,
\end{equation}
that is to say, 
\[
   \delta\mathbf{U}_n
   = -\Psi(\mathbf{U}_n^\sharp)\,\mathbf{R}_n^\sharp.
\]
Due to estimate \eqref{eq:estimativa_tame_inversa} we obtain for each $m \ge 0$,
\begin{align}
   \|\delta\mathbf{U}_n\|_{\mathcal{X}_m}
   &\le C\Big(
      \|\mathbf{R}_n^\sharp\|_{\mathcal{Y}_{m+1}}
      + \|\mathbf{R}_n^\sharp\|_{\mathcal{Y}_{0}}\;\|\mathbf{U}_n^\sharp\|_{\mathcal{X}_{m+1}}
   \Big)\nonumber\\
   &\le C\Big(
      \|\mathbf{R}_n\|_{\mathcal{Y}_{m+1}}
      + \theta_n^{m+1}\|\mathbf{R}_n\|_{\mathcal{Y}_{0}}
      + \|\mathbf{R}_n\|_{\mathcal{Y}_{0}}\;\|\mathbf{U}_n\|_{\mathcal{X}_{m+1}}
   \Big), 
\label{eq:est-deltaUn}
\end{align}
where we have used \eqref{eq:smooth-1}, \eqref{eq:smooth-2}, and 
the Littlewood Paley decomposition, treating the low and high-frequency terms, such that, 
the final result is linear in the higher-order norms. 
Indeed, to justify the second inequality in \eqref{eq:est-deltaUn}, we analyze the interaction 
between the smoothing operator $S_{\theta_n}$ and the tame estimate of the inverse. 
Let $\Delta_j$ be the Littlewood-Paley dyadic blocks, such that, 
\begin{equation}
\label{threshold}
   S_{\theta_n} \mathbf{R}_n=\sum_{2^j\leq \theta_n}\Delta_j \mathbf{R}_n.
\end{equation}
By the spectral properties of $S_{\theta_n}$, 
which acts as a low-pass filter, we decompose the analysis into low and high-frequency 
components relative to the scale $m+1$. 
Due to \eqref{threshold}, the operator $S_{\theta_n}$ realizes this decomposition by truncating the frequency
spectrum at the threshold $\theta_n$. 
Consequently, for any function in the image of this operator, the Bernstein-type inequality yields
\[
\|S_{\theta_n} \mathbf{R}_n\|_{\mathcal{Y}_{s}} \leq C \theta_n^{s-r} \|\mathbf{R}_n\|_{\mathcal{Y}_r}, \quad \text{for any $s \geq r \geq 0$.}
\]
While the fundamental Bernstein estimates are provided for $L^p$ norms in 
Lemma 2.1 of \cite{CBD}, they lead directly to the desired bounds in the $\mathcal{Y}_{s}$
framework via dyadic localization. For a detailed derivation in the context of Littlewood-Paley Theory, see Chapter 2 in \cite{CBD}.
Then, interpolating between the cases where $\mathbf{R}_n$ is measured in its natural high-order norm and the growth-limited low-order norm, 
we obtain 
$$
\|S_{\theta_n} \mathbf{R}_n\|_{\mathcal{Y}_{m+1}} \leq C \left( \|\mathbf{R}_n\|_{\mathcal{Y}_{m+1}} + \theta_n^{m+1} \|\mathbf{R}_n\|_{\mathcal{Y}_0} \right), 
$$
which follows from the fact that $S_{\theta_n}$ is bounded on $\mathcal{Y}_{m+1}$ 
(giving the first term) and the loss of $m+1$ derivatives is compensated by the growth 
$\theta_n^{m+1}$ when acting on the $\mathcal{Y}_0$ norm (giving the second term).
Next, for the second term in the first line of \eqref{eq:est-deltaUn}, we use the stability of 
$S_{\theta_n}$ in the base norm and its growth in the high norm, that is to say, 
$$
\begin{aligned}
\|\mathbf{R}_n^\sharp\|_{\mathcal{Y}_0}&= \|S_{\theta_n}\mathbf{R}_n\|_{\mathcal{Y}_0} \leq C \|\mathbf{R}_n\|_{\mathcal{Y}_0}, 
\\[5pt]
\|\mathbf{U}_n^\sharp\|_{\mathcal{X}_{m+1}}&= \|S_{\theta_n}\mathbf{U}_n\|_{\mathcal{X}_{m+1}} \leq C\|\mathbf{U}_n\|_{\mathcal{X}_{m+1}}.
\end{aligned}
$$
Combining these results into the tame estimate \eqref{eq:estimativa_tame_inversa}, we have 
\begin{align*}
\|\delta\mathbf{U}_n\|_{\mathcal{X}_m}
&\leq
C \Big(
\underbrace{\|\mathbf{R}_n\|_{\mathcal{Y}_{m+1}} + \theta_n^{m+1} \|\mathbf{R}_n \|_{\mathcal{Y}_0}}_{\text{from } \|\mathbf{R}_n^\sharp\|_{\mathcal{Y}_{m+1}}}
+
\underbrace{\|\mathbf{R}_n\|_{\mathcal{Y}_0} |\mathbf{U}_n\|_{\mathcal{X}_{m+1}}}_{\text{from } \|\mathbf{R}_n^\sharp\|_{\mathcal{Y}_0}\|\mathbf{U}_n^\sharp\|_{\mathcal{X}_{m+1}}}
\Big),
\end{align*}
which leads exactly to \eqref{eq:est-deltaUn}. This specific form is crucial for the Nash-Moser scheme because it keeps the 
high-order norm $\|\mathbf{U}_n\|_{\mathcal{X}_{m+1}}$ linear and couples the dangerous growth $\theta_n^{m+1}$ 
only with the residual's low norm $\|\mathbf{R}_n\|_{\mathcal{Y}_0}$, which vanishes as the iteration progresses.
Therefore, we update the iteration scheme by defining, $\mathbf{U}_{n+1}:= \mathbf{U}_n + \delta\mathbf{U}_n$.

Stage $(iii)$. The new residual is $\mathbf{R}_{n+1} = \mathbf{F}(\mathbf{U}_{n+1})$, and using Taylor's theorem 
with integral remainder in Banach spaces, we have 
\[
   \mathbf{F}(\mathbf{U}_{n+1})
   = \mathbf{F}(\mathbf{U}_n)
     + d\mathbf{F}(\mathbf{U}_n)\,\delta\mathbf{U}_n
     + \mathbf{E}_n^{(0)},
\]
where the squared error is given by 
\[
   \mathbf{E}_n^{(0)}
   = \int_0^1 (1-\tau)\,
     d^2\mathbf{F}(\mathbf{U}_n+\tau\delta\mathbf{U}_n)
       (\delta\mathbf{U}_n,\delta\mathbf{U}_n)\,d\tau.
\]
Since \eqref{eq:lin-step} uses $d\mathbf{F}(\mathbf{U}_n^\sharp)$ rather than
$d\mathbf{F}(\mathbf{U}_n)$, we conveniently introduce the substitution error
\[
   \mathbf{E}_n^{(sub)}= \big(d\mathbf{F}(\mathbf{U}_n) - d\mathbf{F}(\mathbf{U}_n^\sharp)\big) \delta\mathbf{U}_n, 
\]
and thus
\begin{equation}
\label{eq:Rn+1-decomp}
   \mathbf{R}_{n+1}
   = \big(\mathbf{R}_n + d\mathbf{F}(\mathbf{U}_n^\sharp)\delta\mathbf{U}_n\big)
     + \mathbf{E}_n^{(0)} + \mathbf{E}_n^{(sub)}.
\end{equation}
From the choice of $\delta\mathbf{U}_n$, it follows that 
$d\mathbf{F}(\mathbf{U}_n^\sharp)\delta\mathbf{U}_n=-\mathbf{R}_n^\sharp$, hence 
\[
\mathbf{R}_{n+1}=\mathbf{R}_n-\mathbf{R}_n^\sharp+\mathbf{E}_n^{(0)}+\mathbf{E}_n^{(sub)}
=(I-S_{\theta_n})\mathbf{R}_n+\mathbf{E}_n^{(0)}+\mathbf{E}_n^{(sub)}.
\]

\medskip
6. Now, we compute the estimates of the errors $\mathbf{E}_n^{(0)}$, $\mathbf{E}_n^{(\mathrm{sub})}$ (with $m_0 = 0$).
From the multilinear structure of $\mathbf{F}$, we obtain a tame estimate for the second derivative, that is, for all $m \ge 0$,
\begin{equation}
\label{eq:d2F-tame}
\begin{aligned}
   \big\|d^2\mathbf{F}(\mathbf{U})(\mathbf{V}_1,\mathbf{V}_2)\big\|_{\mathcal{Y}_m}
   \le C\Big(&
      \|\mathbf{V}_1\|_{\mathcal{X}_{m+1}}\|\mathbf{V}_2\|_{\mathcal{X}_{0}}
      +\|\mathbf{V}_1\|_{\mathcal{X}_{0}}\|\mathbf{V}_2\|_{\mathcal{X}_{m+1}}
      \\[5pt]
      &+ \|\mathbf{V}_1\|_{\mathcal{X}_{0}}\|\mathbf{V}_2\|_{\mathcal{X}_{0}}
        \big(1+\|\mathbf{U}\|_{\mathcal{X}_{m+1}}\big)
   \Big).
\end{aligned}   
\end{equation}
Then, applying \eqref{eq:d2F-tame} with $\mathbf{V}_1=\mathbf{V}_2=\delta\mathbf{U}_n$, we have, for each $m \ge 0$,
\begin{equation}
\label{eq:En0-est}
   \|\mathbf{E}_n^{(0)}\|_{\mathcal{Y}_m}
   \le C\Big(
      \|\delta\mathbf{U}_n\|_{\mathcal{X}_{m+1}}\|\delta\mathbf{U}_n\|_{\mathcal{X}_{0}}
      + \|\delta\mathbf{U}_n\|_{\mathcal{X}_{0}}^2\big(1+\|\mathbf{U}_n\|_{\mathcal{X}_{m+1}}\big) \Big), 
\end{equation}
and thus, $\mathbf{E}_n^{(0)}=O(\|\delta\mathbf{U}_n\|^2)$. Moreover, applying the identity 
\[
   d\mathbf{F}(\mathbf{U}_n) - d\mathbf{F}(\mathbf{U}_n^\sharp)
   = \int_0^1
     d^2\mathbf{F}\big(\mathbf{U}_n^\sharp
       + \tau(\mathbf{U}_n-\mathbf{U}_n^\sharp)\big)
       (\mathbf{U}_n-\mathbf{U}_n^\sharp,\cdot)\,d\tau,
\]
which combined with \eqref{eq:d2F-tame} yields for all $m \ge 0$,
$$
\begin{aligned}
   \|\mathbf{E}_n^{(sub)}\|_{\mathcal{Y}_m}
   \le C\Big(
      \|\mathbf{U}_n-\mathbf{U}_n^\sharp\|_{\mathcal{X}_{m+1}}\,
      \|\delta\mathbf{U}_n\|_{\mathcal{X}_{0}}
      \\[5pt]
      + \|\mathbf{U}_n-\mathbf{U}_n^\sharp\|_{\mathcal{X}_{0}}\,
        \|\delta\mathbf{U}_n\|_{\mathcal{X}_{m+1}} \Big), 
\end{aligned}      
$$
where the term
$ \| \mathbf{U}_n-\mathbf{U}_n^\sharp \|_{\mathcal{X}_{0}}\|\delta\mathbf{U}_n\|_{\mathcal{X}_{0}}
        \big(1+\|\mathbf{U}\|_{\mathcal{X}_{m+1}})$
 is omitted because it is asymptotically dominated by the other cross-terms.
By the properties of $S_{\theta_n}$, and choosing an index $m' \gg m$, such that, 
\[
   \|\mathbf{U}_n-\mathbf{U}_n^\sharp\|_{\mathcal{X}_m}
   \le C\,\theta_n^{m-m'} \, \|\mathbf{U}_n\|_{\mathcal{X}_{m'}},
\]
we have for $\mu= m'-(m+1)> 0$, 
\begin{equation}
\label{eq:Ensub-est}
\begin{aligned}
   \|\mathbf{E}_n^{(sub)}\|_{\mathcal{Y}_m}
   \le C \theta_n^{-\mu} \|\mathbf{U}_n\|_{\mathcal{X}_{m'}} \Big(
      \|\delta\mathbf{U}_n\|_{\mathcal{X}_{0}}
      + \|\delta\mathbf{U}_n\|_{\mathcal{X}_{m+1}} \Big).
\end{aligned}     
\end{equation}

\medskip
7. Now, we establish the convergence of the iteration, hence combine the decomposition 
$\mathbf{R}_{n+1} = (I-S_{\theta_n})\mathbf{R}_n + \mathbf{E}_n^{(0)} + \mathbf{E}_n^{(sub)}$ 
with the previous bounds. For $m=0$, the updated residual satisfies
\begin{equation}
\label{eq:Rn+1-final-est}
\begin{aligned}
   \|\mathbf{R}_{n+1}\|_{\mathcal{Y}_0}
   \le C \Big( & \|(I-S_{\theta_n})\mathbf{R}_n\|_{\mathcal{Y}_0} \\
   & + \|\delta\mathbf{U}_n\|_{\mathcal{X}_{1}}\|\delta\mathbf{U}_n\|_{\mathcal{X}_{0}} + \|\delta\mathbf{U}_n\|_{\mathcal{X}_{0}}^2\big(1+\|\mathbf{U}_n\|_{\mathcal{X}_{1}}\big) \\
   & + \theta_n^{-\mu} \|\mathbf{U}_n\|_{\mathcal{X}_{m'}} \big( \|\delta\mathbf{U}_n\|_{\mathcal{X}_{0}} + \|\delta\mathbf{U}_n\|_{\mathcal{X}_{1}} \big) \Big),
\end{aligned}
\end{equation}
where $\mu = m' - 1 > 0$. By the tame inverse estimate \eqref{eq:est-deltaUn} and the smoothing property 
$\|S_{\theta_n}\mathbf{R}_n\|_{\mathcal{Y}_1} \le C \theta_n \|\mathbf{R}_n\|_{\mathcal{Y}_0}$, 
the corrections $\|\delta\mathbf{U}_n\|_{\mathcal{X}_0}$ and $\|\delta\mathbf{U}_n\|_{\mathcal{X}_1}$ 
are bounded by $C \theta_n^2 \|\mathbf{R}_n\|_{\mathcal{Y}_0}$.
Hence we observe that the relation \eqref{eq:Rn+1-final-est} implies a super-linear 
recurrence for the residuals. To establish the convergence rigorously, let us denote $\epsilon_n= \|\mathbf{R}_n\|_{\mathcal{Y}_0}$. 
Given the tame estimates and the property of $\delta\mathbf{U}_n$ in \eqref{eq:est-deltaUn}, we have
\begin{equation}
\label{eq:rec-residual-final}
\begin{aligned}
\epsilon_{n+1} \le C \Big( & \underbrace{\theta_n^{-m'} \|\mathbf{R}_n\|_{\mathcal{Y}_{m'}}}_{\text{Smoothing error}} 
\\
& + \underbrace{\|\delta\mathbf{U}_n\|_{\mathcal{X}_{1}}\|\delta\mathbf{U}_n\|_{\mathcal{X}_{0}} 
+ \|\delta\mathbf{U}_n\|_{\mathcal{X}_{0}}^2 \|\mathbf{U}_n\|_{\mathcal{X}_{1}}}_{\text{Quadratic error } \mathbf{E}_n^{(0)} \text{ (2 terms)}}
\\
& + \underbrace{\theta_n^{-\mu} \|\mathbf{U}_n\|_{\mathcal{X}_{m'}} \|\delta\mathbf{U}_n\|_{\mathcal{X}_{0}} 
+ \theta_n^{-\mu} \|\mathbf{U}_n\|_{\mathcal{X}_{m'}} \|\delta\mathbf{U}_n\|_{\mathcal{X}_{1}}}_{\text{Substitution error } \mathbf{E}_n^{(sub)} \text{ (2 terms)}} \Big).
\end{aligned}
\end{equation}
Then, we apply the following argument to establish
the convergence. First, by taking $T > 0$ sufficiently small, we ensure that 
the initial residual $\epsilon_0= \|\mathbf{F}(\mathbf{U}_0)\|_{\mathcal{Y}_0}$ is small enough 
to satisfy the starting condition of the Nash-Moser iteration. 
Since $\mathbf{F}(\mathbf{U}_0)$ vanishes at $t=0 $, and its norm in the trajectory space depends continuously on the time interval, 
it follows that, $\epsilon_0 \to 0$ as $T \to 0$.
Next, recalling that $\theta_n = 2^n$, and assuming the regularity index $m'$ is sufficiently large, such that, $\mu = m'-1$ 
compensates for the polynomial growth of the high-order norms $\|\mathbf{U}_n\|_{\mathcal{X}_{m'}}$ and $\|\mathbf{R}_n\|_{\mathcal{Y}_{m'}}$, 
we maintain the inductive bound, $\epsilon_n \le \epsilon_0  \, \theta_n^{-\alpha}$, for $\alpha > 2$.
Finally, from the tame estimate \eqref{eq:est-deltaUn}, the corrections satisfy $\|\delta\mathbf{U}_n\|_{\mathcal{X}_0} \le C \theta_n^{2} \epsilon_n$. 
Given the established decay of $\epsilon_n$, it follows that
$$
    \sum_{n=0}^{\infty} \|\delta\mathbf{U}_n\|_{\mathcal{X}_0} \le \sum_{n=0}^{\infty} C\, \epsilon_0 \,  \theta_n^{2-\alpha} < \infty. 
$$
This implies that $\{\mathbf{U}_n\}_{n \in \mathbb{N}}$ is a Cauchy sequence in the Banach space $\mathcal{X}_0$.
Consequently, $\mathbf{U}_n \to \mathbf{U} = (\rho, \mathbf{u})$ in $\mathcal{X}_0 = C([0,T]; H_p^\beta(\Omega))^2$ as $n \to \infty$. 
Furthermore, although the mapping $\mathbf{F}$ involves a loss of derivatives, it satisfies a tame 
continuity property. More precisely, for any $m \ge 0$, $\mathbf{F}$ is continuous from $\mathcal{X}_{m+1}$ to $\mathcal{Y}_m$. 
Therefore, combining the strong convergence $\mathbf{U}_n \to \mathbf{U}$ in $\mathcal{X}_0$ with the uniform bounds in higher-order norms 
$\mathcal{X}_m$ provided by the Nash-Moser iteration, we obtain via interpolation that $\mathbf{U}_n \to \mathbf{U}$ in $\mathcal{X}_1$ 
(or more generally in $\mathcal{X}_{1+\kappa}$ for small $\kappa>0$). Consequently, by the continuity of $\mathbf{F}$ as a mapping from $\mathcal{X}_1$ to $\mathcal{Y}_0$, we obtain
$$
\mathbf{F}(\mathbf{U}) = \lim_{n \to \infty} \mathbf{F}(\mathbf{U}_n) = \lim_{n \to \infty} \mathbf{R}_n = 0,
$$
where the limit is taken in the $\mathcal{Y}_0$ topology.

\medskip
8. At this point, we address the issue of temporal regularity.
Indeed, the Nash-Moser iteration was carried out in 
$\mathcal{X}_0=C([0,T];H_p^\beta(\Omega))^2$, such that, the limit 
\[
(\rho,\mathbf{u}) \in C([0,T];H_p^\beta(\Omega))^2.
\]
Moreover, since $\mathbf{F}(\rho,\mathbf{u})= 0$, the corresponding 
equivalent integral equations are given by 
\begin{equation}
\label{eq:int-cont}
\rho(t)
= \rho_0(\mathbf{A}_t)
  - \int_0^t
      \big(\rho\,{\rm div}\,\mathbf{u}\big)
      \big(s,\mathbf{A}_{t-s}\big)\,ds,
\end{equation}
\begin{equation}
\label{eq:int-mom}
\begin{aligned}
\mathbf{u}(t)= (\nabla\mathbf{A}_t)^\ast \mathbf{u}_0(\mathbf{A}_t)
  - \int_0^t \Big( \mathcal{W}^\ve_{t-s}(\mathbf{x}) \, \nabla \big( h(\rho) \big)(s,\mathbf{A}_{t-s})
  \\[5pt] 
  - \nabla\big(\tfrac12|\mathbf{u}|^2 \big)(s,\mathbf{A}_{t-s}) \Big)\,ds, 
\end{aligned}      
\end{equation}
where we have 
\[
{\rm div}\,\mathbf{u}, 
\mathcal{W}^\ve \nabla\! \big(h(\rho)\big),  \nabla \big(\tfrac12|\mathbf{u}|^2 \big) \in C([0,T];H_p^{\beta-1}(\Omega)).
\]
Using the composition properties (since $\mathbf{A}_{t}(\cdot)$ is $C^{1,\alpha}(\Omega)$),
it follows that, the integrands in \eqref{eq:int-cont} and \eqref{eq:int-mom} belong to
$C([0,T];H_p^{\beta-1}(\Omega))^2$. By the Fundamental Theorem of Calculus in Banach spaces, the mappings
\[
t\mapsto \int_0^t G(s,t)\,ds
\quad\text{with }G\in C([0,T];H_p^{\beta-1}(\Omega))^2
\]
are $C^1$ with values in $H_p^{\beta-1}(\Omega)$. Consequently, we have 
\[
(\partial_t\rho,\partial_t\mathbf{u})\in C([0,T];H_p^{\beta-1}(\Omega))^2,
\]
that is, 
\[
(\rho,\mathbf{u})
\in C([0,T];H_p^\beta(\Omega))^2\cap C^1([0,T];H_p^{\beta-1}(\Omega))^2.
\]
The uniqueness of the limit follows from the Lipschitz stability of the trajectory mapping. 
Let $\mathbf{U}_1$ and $\mathbf{U}_2$ be two solutions in $\mathcal{X}_0$ corresponding to the same initial data 
$(\rho_0, \mathbf{u}_0)$. Their difference $\mathbf{V} = \mathbf{U}_1 - \mathbf{U}_2$ satisfies the relation 
$\mathbf{V} = \mathcal{K}(\mathbf{U}_1, \mathbf{U}_2)\mathbf{V}$. 
Taking the $\mathcal{X}_0$-norm and utilizing the contraction property established in Step 4, we have
\[
\|\mathbf{V}\|_{\mathcal{X}_0} \le C(M) T \|\mathbf{V}\|_{\mathcal{X}_0}.
\]
Since the existence time $T$ was chosen such that $C(M)T \le \frac{1}{2}$, 
it follows that $\|\mathbf{V}\|_{\mathcal{X}_0} = 0$, which implies $\mathbf{U}_1 = \mathbf{U}_2$ in $[0,T]$.

\medskip
9. Now, we consider the passage to the limit as $\varepsilon \to 0$. 
For each $\varepsilon \in (0,1]$, let $\mathbf{U}^\varepsilon = (\rho^\varepsilon, \mathbf{u}^\varepsilon)$ be the unique 
regular solution in $\mathcal{X}_\infty$ obtained via the Nash-Moser iteration in the interval $[0,T]$. A fundamental feature 
of the trajectory mapping $\mathbf{F}$ is that the tame estimates derived in item 6 are uniform with respect to $\varepsilon$. 
Indeed, since $0 \le \mathcal{W}^\varepsilon \le 1$ and $\|\mathcal{W}^\varepsilon\|_{L^\infty} = 1$ for any $\varepsilon > 0$, 
the constant $C_m(M)$ in \eqref{eq:estimativa_tame_inversa} and the time $T$ obtained in the Claim depend only on the $H_p^\beta$ 
norms of the initial data and the radius $M$, but not on the regularizing parameter.

\medskip
\noindent\textit{Existence of the Limit.}
The sequence $(\rho^\varepsilon, \mathbf{u}^\varepsilon)_{\varepsilon > 0}$ is uniformly bounded in $C([0,T]; H_p^\beta(\Omega))^2$. 
From the temporal regularity established in item 8, the sequence $(\partial_t \rho^\varepsilon, \partial_t \mathbf{u}^\varepsilon)$ 
is also uniformly bounded in $C([0,T]; H_p^{\beta-1}(\Omega))^2$. Then, applying the Lions-Aubin Theorem, 
$(\rho^\varepsilon, \mathbf{u}^\varepsilon)_{\varepsilon > 0}$
converges strongly to a limit (passing to subsequences if necessary), 
$$
   (\rho, \mathbf{u}) \in C([0,T]; H_p^{\beta-\delta}(\Omega))^2, \quad \text{for any $\delta> 0$}. 
$$
Also, by classical arguments from functional analysis, it follows that, 
$$
   (\rho, \mathbf{u}) \in C_w([0,T]; H_p^{\beta}(\Omega))^2. 
$$
To conclude that $(\rho, \mathbf{u}) \in C([0,T]; H_p^{\beta}(\Omega))^2$, we utilize the fact that 
the limit trajectory $\mathbf{U} = (\rho, \mathbf{u})$ satisfies the integral equations \eqref{eq:int-cont}, \eqref{eq:int-mom}. 
Indeed, on the fluid domain where $\rho_0(\mathbf{A}_t(\mathbf{x})) > 0$, the weight $\mathcal{W}^\varepsilon$ 
converges pointwise to $1$, recovering the classical enthalpy gradient. Conversely, on the vacuum set where 
$\rho_0(\mathbf{A}_t(\mathbf{x})) = 0$, the weight $\mathcal{W}^\varepsilon$ is identically zero for all $\varepsilon$, 
and the momentum equation reduces to a pure transport regime for the velocity field. Since the flow maps 
$\mathbf{A}_t^{\mathbf{u}^\varepsilon}$ converge uniformly to $\mathbf{A}_t^{\mathbf{u}}$, the limit 
$(\rho, \mathbf{u})$ satisfies the integral equations \eqref{eq:int-cont}, \eqref{eq:int-mom} 
Moreover, since the integral mapping $t \mapsto \int_0^t (\cdot) \, ds$ in \eqref{eq:int-cont} and \eqref{eq:int-mom} 
are not only continuous but $C^1$ with values in $H_p^{\beta-1}(\Omega)$, by the smoothing properties 
of the integral, these terms actually belong to $C([0,T]; H_p^{\beta}(\Omega))^2$. 
Similarly, the term $\mathbf{U}_0 \circ \mathbf{A}_t^{\mathbf{u}}$ carries the $H_p^{\beta}(\Omega)$ regularity,
since $\mathbf{A}_t$ is the back-to-labels map associated with $\mathbf{u} \in C([0,T]; C^{1,\alpha}(\Omega))$,
for any $0 < \alpha < \beta - 1 - \frac{d}{p}$, and the mapping $t \mapsto \mathbf{A}_t$ 
is continuous in the $C^1$-topology. It is a standard result in transport theory that, 
for $\mathbf{U}_0 \in H_p^{\beta}(\Omega)$, the composition $\mathbf{U}_0 \circ \mathbf{A}_t^{\mathbf{u}}$
is strongly continuous in $t$ with values in $H_p^{\beta}(\Omega)$.
Therefore, $(\rho, \mathbf{u})$ is a regular solution in the sense of Definition \ref{RegSol}.

\medskip
10. Finally, we consider the stability of solutions.
To conclude the well-posedness in the sense of Hadamard, we show that the solution 
$$
\text{$(\rho, \mathbf{u})$ depends continuously on the initial data $(\rho_0, \mathbf{u}_0)$.}
$$
 Let $(\rho_1, \mathbf{u}_1)$ and $(\rho_2, \mathbf{u}_2)$ be two solutions corresponding to initial 
 conditions $(\rho_{0,1}, \mathbf{u_{0,1}})$ and $(\rho_{0,2}, \mathbf{u_{0,2}})$, respectively, 
 and set 
$$
    \mathbf{V} = (\rho_1 - \rho_2, \mathbf{u}_1 - \mathbf{u}_2).
$$    
By taking the difference of the integral equations \eqref{eq:int-cont}, \eqref{eq:int-mom}, 
and applying the Mean Value Theorem to the non-linearities (enthalpy gradient and transport terms), 
we obtain an estimate in the $\mathcal{X}_0$-norm
\begin{equation}
\label{eq:stability-est}
\|\mathbf{V}\|_{\mathcal{X}_0} \le C(M) \left( \|\rho_{0,1} - \rho_{0,2}\|_{H_p^\beta} + \|\mathbf{u}_{0,1} - \mathbf{u}_{0,2}\|_{H_p^\beta} \right) + C(M)T \|\mathbf{V}\|_{\mathcal{X}_0},
\end{equation}
where $C(M)$ depends on the $H_p^\beta$ norms of the solutions and the regularity of $h$. 
The first term on the right-hand side accounts for the difference in the initial states 
transported by the flow and the linearization of the back-to-labels maps $\mathbf{A}_t^{\mathbf{u}_1}, \mathbf{A}_t^{\mathbf{u}_2}$.
In fact, to close the stability estimate \eqref{eq:stability-est}, it is necessary to control the difference between the 
back-to-labels mappings $\mathbf{A}_t^{\mathbf{u}_1}$ and $\mathbf{A}_t^{\mathbf{u}_2}$. 
Since $\mathbf{A}_t$ is defined as the inverse of the flow $\mathbf{X}_t$, which satisfies \eqref{CPODE}, we consider the 
difference,
$\mathbf{A}_t^{\mathbf{u}_1}(\mathbf{x}) - \mathbf{A}_t^{\mathbf{u}_2}(\mathbf{x})$, 
and using the fact that $$\mathbf{A}_t(\mathbf{x}) = \mathbf{x} - \int_0^t \mathbf{u}(s, \mathbf{X}_s(\mathbf{A}_t(\mathbf{x}))) \, ds,$$ 
and since that, for $i= 1, 2$, 
$$
   \mathbf{u}_i \in C([0,T]; H_p^\beta) \hookrightarrow C([0,T]; C^{1,\alpha}), 
$$
we have 
\begin{equation}
\label{eq:flow-stability}
\begin{aligned}
\|\mathbf{A}_t^{\mathbf{u}_1} - \mathbf{A}_t^{\mathbf{u}_2}\|_{H_p^\beta}& \le C(M) \int_0^t \|\mathbf{u}_1(s) - \mathbf{u}_2(s)\|_{H_p^\beta} \, ds 
\\[5pt]
&\le C(M) T \|\mathbf{u}_1 - \mathbf{u}_2\|_{\mathcal{X}_0}.
\end{aligned}
\end{equation}
Furthermore, for any $f \in H_p^\beta$, the composition error satisfies
\[
\|f \circ \mathbf{A}_t^{\mathbf{u}_1} - f \circ \mathbf{A}_t^{\mathbf{u}_2}\|_{H_p^{\beta-1}} \le C \|f\|_{H_p^\beta} \|\mathbf{A}_t^{\mathbf{u}_1} - \mathbf{A}_t^{\mathbf{u}_2}\|_{H_p^{\beta-1}}.
\]
This estimate allows us to linearize the differences in the continuity and momentum equations. 
Specifically, the terms involving the initial data $\rho_0(\mathbf{A}_t^{\mathbf{u}_1}) - \rho_0(\mathbf{A}_t^{\mathbf{u}_2})$ are controlled by
\[
\|\rho_0 \circ \mathbf{A}_t^{\mathbf{u}_1} - \rho_0 \circ \mathbf{A}_t^{\mathbf{u}_2}\|_{\mathcal{X}_0} \le C \|\rho_0\|_{H_p^\beta} \cdot C(M)T \|\mathbf{u}_1 - \mathbf{u}_2\|_{\mathcal{X}_0}.
\]
This confirms that the error induced by the deformation of the trajectories is of order $O(T)$, and thus can be absorbed into the left-hand side 
of the stability inequality for $T$ sufficiently small. This ensures the Lipschitz continuity of the solution map $(\rho_0, \mathbf{u}_0) \mapsto (\rho, \mathbf{u})$.
Moreover, since the time $T$ is chosen such that $C(M)T \le \frac{1}{2}$, we can absorb the last term into the left-hand side, yielding
\[
\|(\rho_1 - \rho_2, \mathbf{u}_1 - \mathbf{u}_2)\|_{\mathcal{X}_0} \le 2C(M) \|(\rho_{0,1} - \rho_{0,2}, \mathbf{u}_{0,1} - \mathbf{u}_{0,2})\|_{H_p^\beta \times H_p^\beta}.
\]
This Lipschitz continuity estimate ensures that the solution is stable under small perturbations of the initial data in the $H_p^\beta$ topology, completing the proof of the theorem.

\medskip
11. \underline{Proof of the Claim.}
Step $(i)$. First, we rewrite the linear equation
\begin{equation}
\label{Eq1Claim}
d\mathbf F(\mathbf U)\mathbf V=\mathbf G,\qquad \mathbf V=(\sigma,\mathbf w),
\end{equation}
as a Volterra equation of second kind on the trajectory space $\mathcal X_m$.
Due to \eqref{eq:lin-Fcont}--\eqref{eq:lin-Fmom}, the operator $d\mathbf F(\mathbf U)$
has the form
\begin{equation}\label{eq:Id-minus-K}
d\mathbf F(\mathbf U)\mathbf V = \mathbf V-\mathcal K_{\mathbf U}\mathbf V,
\end{equation}
where $\mathcal K_{\mathbf U}$ collects all terms involving time integrals,
compositions with the back-to-labels map $\mathbf A_t$, its Jacobian $\nabla\mathbf A_t$,
and the linearized back-to-labels perturbation $\dot{\mathbf A}_t[\mathbf w]$.
Hence the equation \eqref{Eq1Claim} is equivalent to
\begin{equation}\label{eq:Volterra-fixedpoint}
\mathbf V=\mathbf G+\mathcal K_{\mathbf U}\mathbf V.
\end{equation}

\medskip
Step $(ii)$. Since $\beta>\frac dp+1$, $H_p^\beta(\Omega)\hookrightarrow C^{1,\alpha}(\Omega)$
with $\alpha=\beta-1-\frac dp>0$, and in particular,
$O(\|\nabla\mathbf u(t)\|_{L^\infty}+\|\nabla\mathbf u(t)\|_{C^\alpha})= O(\|\mathbf u(t)\|_{H_p^\beta})$.
Therefore, if $\|\mathbf U\|_{\mathcal X_0}\le M$ and $T$ is sufficiently small (depending only on $M$),
the flow map associated with $\mathbf u$ exists on $[0,T]$, and its inverse back-to-labels map
$\mathbf A_t=\mathbf A_t^{\mathbf u}$ is a $C^{1,\alpha}$-diffeomorphism in $x$ satisfying
\begin{equation}\label{eq:A-smalltime}
\sup_{t\in[0,T]}\big\|\nabla\mathbf A_t-I\big\|_{C^\alpha}
\le C(M)\,T,\quad
\sup_{t\in[0,T]}\|\nabla\mathbf A_t\|_{C^\alpha}\le C(M).
\end{equation}
Moreover, composition by $\mathbf A_t$ is bounded on Bessel potential spaces, that is, 
for every $m\ge 0$,
\begin{equation}
\label{eq:comp-Hp}
\sup_{t\in[0,T]}\|f\circ \mathbf A_t\|_{H_p^{\beta+m}}
\le C_m(M)\,\|f\|_{H_p^{\beta+m}},
\end{equation}
and similarly for $f\circ \mathbf A_{t-s}$ uniformly in $(t,s)\in[0,T]^2$.

\medskip
Step $(iii)$. Now, the perturbation $\dot{\mathbf A}_t=\dot{\mathbf A}_t[\mathbf w]$ solves the linear transport equation
\eqref{eq:lin-At}. By standard transport estimates in $H_p^{\beta+m}$,
using \eqref{eq:A-smalltime} and the algebra property of $H_p^\beta$, we obtain for every $m\ge0$,
\begin{equation}\label{eq:dotA-tame}
\|\dot{\mathbf A}[\mathbf w]\|_{\mathcal X_m}
\le C_m(M)\,T\Big(\|\mathbf w\|_{\mathcal X_m}
+\|\mathbf w\|_{\mathcal X_0}\,\|\mathbf U\|_{\mathcal X_{m+1}}\Big).
\end{equation}
In particular, $\|\dot{\mathbf A}[\mathbf w]\|_{\mathcal X_0}\le C(M)T\|\mathbf w\|_{\mathcal X_0}$.
Then, we estimate $\mathcal K_{\mathbf U}$ in $\mathcal X_m$.
All terms in $\mathcal K_{\mathbf U}\mathbf V$ arise from \eqref{eq:lin-Fcont}, \eqref{eq:lin-Fmom}
and are combinations of:
a time integral $\int_0^t(\cdot)\,ds$;
products and compositions with $\mathbf A_{t-s}$ and $\nabla\mathbf A_{t-s}$;
linear terms involving $\dot{\mathbf A}_{t-s}[\mathbf w]$.
Using \eqref{eq:comp-Hp}, Moser-type product estimates in $H_p^{\beta+m}$, and \eqref{eq:dotA-tame},
we obtain for every $m\ge0$,
\begin{equation}\label{eq:KUm-bound}
\|\mathcal K_{\mathbf U}\mathbf V\|_{\mathcal X_m}
\le C_m(M)\,T\Big(\|\mathbf V\|_{\mathcal X_m}
+\|\mathbf V\|_{\mathcal X_0}\,\|\mathbf U\|_{\mathcal X_{m+1}}\Big)
+ C_m(M)\,T\,\|\mathbf V\|_{\mathcal X_{m+1}},
\end{equation}
where the last term indicates the contributions in which one spatial derivative falls on $(\sigma,\mathbf w)$.
More precisely, in the momentum component \eqref{eq:lin-Fmom},
the integrals
\[
\int_0^t \mathcal W^\varepsilon_{t-s}\,\nabla(h'(\rho)\sigma)\circ \mathbf A_{t-s}\,ds,
\qquad
\int_0^t \nabla(\mathbf u\cdot \mathbf w)\circ \mathbf A_{t-s}\,ds
\]
contain a spatial gradient acting on $(\sigma,\mathbf w)$.
Thus, estimating $d\mathbf F(\mathbf U)\mathbf V$ in $\mathcal Y_m$ requires controlling
$(\sigma,\mathbf w)$ in $\mathcal X_m$ but the right-hand side $\mathbf G$ must be in $\mathcal Y_{m+1}$.
This is the intrinsic one-derivative loss $\mathcal Y_{m+1}\to\mathcal X_m$ encoded in \eqref{eq:estimativa_tame_inversa}.

\medskip
Step $(iv)$. We solve \eqref{eq:Volterra-fixedpoint} by Picard iteration in $\mathcal X_0$.
From \eqref{eq:KUm-bound} with $m=0$ (and using that the gradient terms are absorbed by the $\mathcal Y_1$-norm on $\mathbf G$),
we obtain
\begin{equation}\label{eq:K-contraction}
\|\mathcal K_{\mathbf U}\mathbf V\|_{\mathcal X_0}\le C(M)\,T\,\|\mathbf V\|_{\mathcal X_0}.
\end{equation}
Choose $T>0$ so that $C(M)T\le \tfrac12$. Then the map
\[
\mathcal T(\mathbf V):=\mathbf G+\mathcal K_{\mathbf U}\mathbf V
\]
is a strict contraction on $\mathcal X_0$, hence there exists a unique fixed point
$\mathbf V\in\mathcal X_0$ solving \eqref{eq:Volterra-fixedpoint}, equivalently
$d\mathbf F(\mathbf U)\mathbf V=\mathbf G$.

\medskip
Step $(v)$. Let $m\ge0$. Taking $\mathcal X_m$-norms in \eqref{eq:Volterra-fixedpoint} and using \eqref{eq:KUm-bound} yields
\[
\|\mathbf V\|_{\mathcal X_m}
\le \|\mathbf G\|_{\mathcal X_m}+C_m(M)\,T\Big(\|\mathbf V\|_{\mathcal X_m}
+\|\mathbf V\|_{\mathcal X_0}\,\|\mathbf U\|_{\mathcal X_{m+1}}\Big)
+ \text{(gradient terms)}.
\]
Now we rewrite $\|\mathbf G\|_{\mathcal X_m}$ in terms of $\|\mathbf G\|_{\mathcal Y_{m+1}}$,
since the momentum equation contains one spatial derivative acting on the unknowns,
as in step 3. More precisely, the $H_p^{\beta+m}$-norm of $\mathbf V$ is controlled by
the $H_p^{\beta+m}$-norm of $g_{cont}$ together with the $H_p^{\beta+m}$-norm of $\mathbf g_{mom}$,
which is exactly $\|\mathbf G\|_{\mathcal Y_{m+1}}$ by definition of $\mathcal Y_{m+1}$.
Thus we obtain
\begin{equation}\label{eq:Vm-preabsorb}
\|\mathbf V\|_{\mathcal X_m}
\le C_m(M)\Big(\|\mathbf G\|_{\mathcal Y_{m+1}}
+T\,\|\mathbf V\|_{\mathcal X_m}
+T\,\|\mathbf V\|_{\mathcal X_0}\,\|\mathbf U\|_{\mathcal X_{m+1}}\Big).
\end{equation}
Using step $(iv)$, we already have $\|\mathbf V\|_{\mathcal X_0}\le C(M)\|\mathbf G\|_{\mathcal Y_1}\le C(M)\|\mathbf G\|_{\mathcal Y_0}$.
Choose (possibly smaller) $T$ so that $C_m(M)T\le \tfrac12$, and absorb the $T\|\mathbf V\|_{\mathcal X_m}$ term
to the left-hand side in \eqref{eq:Vm-preabsorb}. This yields
\[
\|\mathbf V\|_{\mathcal X_m}
\le C_m\Big(\|\mathbf G\|_{\mathcal Y_{m+1}}
+\|\mathbf G\|_{\mathcal Y_0}\,\|\mathbf U\|_{\mathcal X_{m+1}}\Big),
\]
which is exactly \eqref{eq:estimativa_tame_inversa}. This proves the tame estimate and the one-derivative loss.

\medskip
Step $(vi)$. Finally, since $\mathbf U\in\mathcal X_\infty$ and $\mathbf G\in\mathcal Y_\infty$, the previous argument applies for every $m\ge0$,
hence $\mathbf V\in\mathcal X_m$ for all $m$, i.e. $\mathbf V\in\mathcal X_\infty$.
Therefore the right inverse $\Psi(\mathbf U):\mathcal Y_\infty\to\mathcal X_\infty$ is well-defined,
unique, and satisfies \eqref{eq:estimativa_tame_inversa}.
The proof of the Claim is finished. 
\end{proof}

\section*{Appendix: Flow Calculations}
\label{FLOWCAL}

In this appendix, for the sake of generality, the flow calculations will 
be performed using $\mathbf{u} + \bar{\mathbf{u}}$ in place of $\mathbf{u}$.
These flow calculations are applied in Section \ref{mainexistence}, 
where the Implicit Function Theorem is employed. 
In what follows, we assume that the velocity vector fields
possess sufficient regularity to perform the necessary calculations.

\medskip
First, from \eqref{CPODE} we conveniently introduce the notation 
\begin{equation}\label{flow}
\frac{d \mathbf{X}_{t}^{\mathbf{u}}}{dt}= \mathbf{u}(t,\mathbf{X}^{\mathbf{u}}_{t}),
\end{equation}
and due to a simple calculation we obtain  
\begin{equation}
\label{flowU+U0}
\frac{d \mathbf{X}_{t}^{\mathbf{u}+ \bar{\mathbf{u}}}}{dt}= \mathbf{u}(t,\mathbf{X}_{t}^{\mathbf{u}+\bar{\mathbf{u}}}) +  \bar{\mathbf{u}}(t,\mathbf{X}_{t}^{\mathbf{u}+\bar{\mathbf{u}}}). 
\end{equation}
Then, we consider the following 
\begin{proposition} 
\label{FRECDERX}
The Fréchet derivative of the flow $\mathbf{X}_{t}^{\mathbf{u}+\bar{\mathbf{u}}}$, at $\mathbf{u}= \mathbf{u}_0$, is given for any time-constant vector field $\mathbf{w}(\cdot)$ by 
\begin{equation}
\label{FrecDervX}
\begin{aligned}
&D_{\mathbf{u}} \mathbf{X}_{t}^{\mathbf{u}+\bar{\mathbf{u}}}|_{\mathbf{u}= \mathbf{u}_0}[\mathbf{w}]
\\[5pt]
&= \exp\Big(\int_{0}^{t}  \big(\nabla_{x} \mathbf{u}_0(\mathbf{X}_{s}^{\mathbf{u}_0 + \bar{\mathbf{u}}}) 
+ \nabla_{x} \bar{\mathbf{u}}(s,\mathbf{X}_{s}^{\mathbf{u}_0+\bar{\mathbf{u}}})\big) \ ds \Big)  \int_0^t \mathbf{w}(\mathbf{X}_{s}^{\mathbf{u}_0 + \bar{\mathbf{u}}}) \, ds.
\end{aligned} 
\end{equation}
Analogously, the Fréchet derivative of its inverse, at $\mathbf{u}= \mathbf{u}_0$, is given by 
\begin{equation}
\label{FrecDervA}
\begin{aligned}
& D_{\mathbf{u}} \mathbf{A}_{t}^{\mathbf{u}+\bar{\mathbf{u}}}|_{\mathbf{u}= \mathbf{u}_0} [\mathbf{w}]= - \nabla_{x} \mathbf{A}_{t}^{\mathbf{u}_0 + \bar{\mathbf{u}}}(\mathbf{A}_t^{\mathbf{u}_0+\bar{\mathbf{u}}}) 
\\[5pt]
& \times  \exp \Big(\int_{0}^{t} \big(\nabla_{x} \mathbf{u}_0 (\mathbf{A}_{t-s}^{\mathbf{u}_0+\bar{\mathbf{u}}})+\nabla_{x} \bar{\mathbf{u}} (s,\mathbf{A}_{t-s}^{\mathbf{u}_0+\bar{\mathbf{u}}})\big) \ ds \Big)  
\int_0^t \mathbf{w}(\mathbf{A}_{t-s}^{\mathbf{u}_0 + \bar{\mathbf{u}}}) \, ds. 
\end{aligned}
\end{equation}
\end{proposition}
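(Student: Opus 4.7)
The plan is to linearize the flow equation \eqref{flowU+U0} in $\mathbf{u}$ at $\mathbf{u}=0$, obtain a linear non-autonomous ODE for the derivative, and solve it by variation of parameters; the formula for the inverse will then follow by implicitly differentiating $\mathbf{A}_t^{\mathbf{u}+\mathbf{u}_0}\circ\mathbf{X}_t^{\mathbf{u}+\mathbf{u}_0}=\mathrm{id}$.

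Concretely, for a time-constant test vector field $\mathbf{w}$, I would consider the one-parameter family $\varepsilon\mapsto \mathbf{X}_t^{\varepsilon\mathbf{w}+\mathbf{u}_0}$. Since $\mathbf{u}_0\in H_p^{\beta+2}(\mathbb{T}^d)\hookrightarrow C^1$, the standard smooth-dependence theorem for ODEs guarantees that the flow is $C^1$ in $\varepsilon$, so the Gâteaux derivative $\mathbf{Y}(t,\mathbf{a}):=\partial_\varepsilon \mathbf{X}_t^{\varepsilon\mathbf{w}+\mathbf{u}_0}(\mathbf{a})|_{\varepsilon=0}$ exists and coincides with $D_\mathbf{u}\mathbf{X}_t^{\mathbf{u}+\mathbf{u}_0}|_{\mathbf{u}=0}[\mathbf{w}]$. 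Differentiating \eqref{flowU+U0} in $\varepsilon$ and setting $\varepsilon=0$ gives the variational equation
$$\dot{\mathbf{Y}}(t)=\nabla_x\mathbf{u}_0(\mathbf{X}_t^{\mathbf{u}_0})\,\mathbf{Y}(t)+\mathbf{w}(\mathbf{X}_t^{\mathbf{u}_0}),\qquad \mathbf{Y}(0)=0.$$
Letting $R(t)$ be the fundamental matrix solution of $\dot R=\nabla_x\mathbf{u}_0(\mathbf{X}_t^{\mathbf{u}_0})R$, $R(0)=I$, which the paper writes compactly as $\exp\bigl(\int_0^t \nabla_x\mathbf{u}_0(\mathbf{X}_s^{\mathbf{u}_0})\,ds\bigr)$, Duhamel's principle yields
$$\mathbf{Y}(t)=R(t)\int_0^t R(s)^{-1}\mathbf{w}(\mathbf{X}_s^{\mathbf{u}_0})\,ds,$$
which is \eqref{FrecDervX}.

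For \eqref{FrecDervA}, I would differentiate the identity $\mathbf{A}_t^{\mathbf{u}+\mathbf{u}_0}\bigl(\mathbf{X}_t^{\mathbf{u}+\mathbf{u}_0}(\mathbf{a})\bigr)=\mathbf{a}$ in $\mathbf{u}$ at $\mathbf{u}=0$ along $\mathbf{w}$. The chain rule produces
$$D_\mathbf{u}\mathbf{A}_t^{\mathbf{u}+\mathbf{u}_0}|_{\mathbf{u}=0}[\mathbf{w}]\bigl(\mathbf{X}_t^{\mathbf{u}_0}(\mathbf{a})\bigr)=-\nabla_x\mathbf{A}_t^{\mathbf{u}_0}\bigl(\mathbf{X}_t^{\mathbf{u}_0}(\mathbf{a})\bigr)\,\mathbf{Y}(t,\mathbf{a}).$$
Setting $\mathbf{x}=\mathbf{X}_t^{\mathbf{u}_0}(\mathbf{a})$, i.e. $\mathbf{a}=\mathbf{A}_t^{\mathbf{u}_0}(\mathbf{x})$, and invoking the semigroup identity $\mathbf{X}_s^{\mathbf{u}_0}\circ \mathbf{A}_t^{\mathbf{u}_0}=\mathbf{A}_{t-s}^{\mathbf{u}_0}$, valid because $\mathbf{u}_0$ is autonomous, converts $\mathbf{X}_s^{\mathbf{u}_0}(\mathbf{A}_t^{\mathbf{u}_0}(\mathbf{x}))$ into $\mathbf{A}_{t-s}^{\mathbf{u}_0}(\mathbf{x})$, giving exactly \eqref{FrecDervA}.

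The main obstacle is making rigorous the differentiability of $\varepsilon\mapsto \mathbf{X}_t^{\varepsilon\mathbf{w}+\mathbf{u}_0}$ in the relevant Bessel-space topology, which requires a Gronwall-type estimate on the variational equation, together with assigning a precise meaning to the exponential notation in \eqref{FrecDervX}, since $\nabla_x\mathbf{u}_0(\mathbf{X}_s^{\mathbf{u}_0})$ does not generally commute with itself at different times; once it is interpreted as the fundamental solution of the variational equation, the remainder is a direct application of Duhamel's formula and the chain rule.
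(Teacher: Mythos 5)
Your strategy coincides with the paper's: differentiate the flow ODE \eqref{flowU+U0} in $\mathbf{u}$ to obtain the variational equation, solve that linear non-autonomous system, and then get \eqref{FrecDervA} by implicitly differentiating the identity between $\mathbf{X}_t^{\mathbf{u}+\mathbf{u}_0}$ and its inverse (the paper differentiates $\mathbf{X}\circ\mathbf{A}=\mathrm{Id}$, you differentiate $\mathbf{A}\circ\mathbf{X}=\mathrm{id}$; these are the same computation). Your explicit appeal to the autonomous semigroup identity $\mathbf{X}_s^{\mathbf{u}_0}\circ\mathbf{A}_t^{\mathbf{u}_0}=\mathbf{A}_{t-s}^{\mathbf{u}_0}$ to produce the shifted times in \eqref{FrecDervA} is a point the paper leaves implicit.

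There is, however, one step you cannot simply assert. The Duhamel formula you correctly derive,
\begin{equation*}
\mathbf{Y}(t)=R(t)\int_0^t R(s)^{-1}\,\mathbf{w}(\mathbf{X}_s^{\mathbf{u}_0})\,ds,
\end{equation*}
is \emph{not} the right-hand side of \eqref{FrecDervX}, which reads $R(t)\int_0^t \mathbf{w}(\mathbf{X}_s^{\mathbf{u}_0})\,ds$ with no $R(s)^{-1}$ inside the integral; the two agree only at $t=0$ or when $\nabla_x\mathbf{u}_0=0$. Writing ``which is \eqref{FrecDervX}'' papers over this. To be fair, the paper's own proof makes exactly the same jump: it passes from $\dot{\mathbf{Y}}=M(t)\mathbf{Y}+f(t)$, $\mathbf{Y}(0)=0$, directly to $\mathbf{Y}(t)=\exp\bigl(\int_0^t M\bigr)\int_0^t f(s)\,ds$, which is not the solution even in the scalar commuting case (the integrating factor $e^{-\int_0^s M}$ is missing from the integrand). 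So your ODE analysis is the more careful of the two, but as a proof of the proposition \emph{as stated} it has an unresolved mismatch: you must either carry the $R(s)^{-1}$ factor into the final formulas (and correspondingly into \eqref{FrecDervA}), or explain precisely in what sense the displayed exponential formula is to be read so that it equals the Duhamel solution. Your closing remark about the non-commutativity of $\nabla_x\mathbf{u}_0(\mathbf{X}_s^{\mathbf{u}_0})$ at different times gestures at this but does not resolve it.
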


\begin{proof}
1. First, let us show \eqref{FrecDervX}.  For any $\delta \neq 0$ we obtain   
\[
\begin{aligned}
D_{\mathbf{u}} &\frac{d\mathbf{X}_{t}^{\mathbf{u}+\bar{\mathbf{u}}}}{dt} [\mathbf{w}]= \lim_{\delta\rightarrow 0} \frac{1}{\delta} 
\Big( \frac{d\mathbf{X}_{t}^{\mathbf{u}+ \delta \mathbf{w} + \bar{\mathbf{u}}}}{dt} - \frac{d\mathbf{X}_{t}^{\mathbf{u}+\bar{\mathbf{u}}}}{dt} \Big)
\\[5pt]
&= \lim_{\delta\rightarrow 0 } \Big( \frac{\mathbf{u}(t,\mathbf{X}_{t}^{\mathbf{u}+ \delta \mathbf{w} + \bar{\mathbf{u}}})- \mathbf{u}(t,\mathbf{X}_{t}^{\mathbf{u} + \bar{\mathbf{u}}})  }{\delta}   
\\[5pt]
&+  \frac{\bar{\mathbf{u}}(\mathbf{X}_{t}^{\mathbf{u}+ \delta \mathbf{w} + \bar{\mathbf{u}}})- \bar{\mathbf{u}}(\mathbf{X}_{t}^{\mathbf{u} + \bar{\mathbf{u}}})  }{\delta}  + \mathbf{w}(\mathbf{X}_{t}^{\mathbf{u}+ \delta \mathbf{w} + \bar{\mathbf{u}}}) \Big)
\\[5pt]
&= \big(\nabla_{x} \mathbf{u}(t,\mathbf{X}_{t}^{\mathbf{u} + \bar{\mathbf{u}}}) + \nabla_{x} \bar{\mathbf{u}}(\mathbf{X}_{t}^{\mathbf{u}+\bar{\mathbf{u}}})\big)  D_{\mathbf{u}} \mathbf{X}_{t}^{\mathbf{u} + \bar{\mathbf{u}}}[\mathbf{w}]
+  \mathbf{w}(\mathbf{X}_{t}^{\mathbf{u}+ \bar{\mathbf{u}}}).
\end{aligned}
\]
Then, we have
\[
\begin{aligned}
D_{\mathbf{u}}& \mathbf{X}_{t}^{\mathbf{u}+\bar{\mathbf{u}}}[\mathbf{w}]
\\[5pt]
&= \exp\Big(\int_{0}^{t}  \big(\nabla_{x} \mathbf{u}(s,\mathbf{X}_{s}^{\mathbf{u} + \bar{\mathbf{u}}}) 
+ \nabla_{x} \bar{\mathbf{u}}(\mathbf{X}_{s}^{\mathbf{u}+\bar{\mathbf{u}}})\big) \ ds \Big)  \int_0^t \mathbf{w}(\mathbf{X}_{s}^{\mathbf{u} + \bar{\mathbf{u}}}) \, ds, 
\end{aligned} 
\]
and taking $\mathbf{u}=\mathbf{u}_0$ we obtain
\[
\begin{aligned}
D_{\mathbf{u}}& \mathbf{X}_{t}^{\mathbf{u}+\bar{\mathbf{u}}}|_{\mathbf{u}= \mathbf{u}_0}[\mathbf{w}]
\\[5pt]
&= \exp\Big(\int_{0}^{t}  \big(\nabla_{x} \mathbf{u}_0(s,\mathbf{X}_{s}^{\mathbf{u}_0 + \bar{\mathbf{u}}}) 
+ \nabla_{x} \bar{\mathbf{u}}(\mathbf{X}_{s}^{\mathbf{u}_0+\bar{\mathbf{u}}})\big) \ ds \Big)  \int_0^t \mathbf{w}(\mathbf{X}_{s}^{\mathbf{u}_0 + \bar{\mathbf{u}}}) \, ds.
\end{aligned} 
\]

\medskip
2. Now, we prove \eqref{FrecDervA}. We claim that 
\begin{equation}
\label{formula}
   D_{\mathbf{u}} \mathbf{A}_{t}^{\mathbf{u}} [\mathbf{w}]= - \big( \nabla_{x} \mathbf{X}_{t}^{\mathbf{u}} \big)^{-1}   
   \,  D_{\mathbf{u}} \mathbf{X}_{t}^{\mathbf{u}} ( \mathbf{A}_{t}^{\mathbf{u}})  [\mathbf{w}].  
\end{equation} 
Indeed, we first observe that 
\[
\mathbf{X}_{t}^{\mathbf{u}}(\mathbf{A}_{t}^{\mathbf{u}})=Id \quad \text{and similarly,} \quad \mathbf{X}_{t}^{\mathbf{u}+\delta \mathbf{w}}(\mathbf{A}_{t}^{\mathbf{u}+\delta \mathbf{w} })= Id,
\]
for any real number $\delta$.
Then, we have for $\delta \neq 0$, 
\[
\begin{aligned}
0&= \frac{\mathbf{X}_{t}^{\mathbf{u}+\delta \mathbf{w}}(\mathbf{A}_{t}^{\mathbf{u}+ \delta \mathbf{w}}) - \mathbf{X}_{t}^{\mathbf{u}}(\mathbf{A}_{t}^{\mathbf{u}})  }{ \delta}
\\[5pt]
&= \frac{\mathbf{X}_{t}^{\mathbf{u}+\delta \mathbf{w}}(\mathbf{A}_{t}^{\mathbf{u}+ \delta \mathbf{w}})-\mathbf{X}_{t}^{\mathbf{u}
+\delta \mathbf{w}}(\mathbf{A}_{t}^{\mathbf{u}}) }{ \delta} + \frac{\mathbf{X}_{t}^{\mathbf{u}+\delta \mathbf{w}}(\mathbf{A}_{t}^{\mathbf{u}})- \mathbf{X}_{t}^{\mathbf{u}}(\mathbf{A}_{t}^{\mathbf{u}})}{\delta}
\\[5pt]
&= I_{1} + I_{2} 
\end{aligned}
\]
with obvious notation. We compute, 
\[
\begin{aligned}
\lim_{\delta \rightarrow 0}  I_{1}&= \lim_{\delta \rightarrow 0}  \frac{ \mathbf{X}_{t}^{\mathbf{u}+\delta \mathbf{w}}(\mathbf{A}_{t}^{\mathbf{u}+ \delta \mathbf{w}})-\mathbf{X}_{t}^{\mathbf{u}+\delta \mathbf{w}}(\mathbf{A}_{t}^{\mathbf{u}})  }{ \delta}
\\[5pt]
&= \nabla_{x} \mathbf{X}_{t}^{\mathbf{u}}(\mathbf{A}^{\mathbf{u}}_{t}) \  D_{\mathbf{u}} \mathbf{A}_{t}^{\mathbf{u}}[\mathbf{w}],  
\end{aligned}
\]
and 
\[
\lim_{\delta \rightarrow 0}  I_{2} = (D_{\mathbf{u}} \mathbf{X}_{t}^{\mathbf{u}})(\mathbf{A}^{\mathbf{u}}_{t}) [\mathbf{w}]. 
\]
Hence we deduce 
$$
 D_{\mathbf{u}} \mathbf{A}_{t}^{\mathbf{u}} [\mathbf{w}]= - \nabla_{x} \mathbf{A}_{t}^{\mathbf{u}}  
   \,  D_{\mathbf{u}} \mathbf{X}_{t}^{\mathbf{u}} ( \mathbf{A}_{t}^{\mathbf{u}})  [\mathbf{w}].  
$$
Therefore, due to \eqref{formula} we obtain  
\[
D_{\mathbf{u}} \mathbf{A}_{t}^{\mathbf{u}+\bar{\mathbf{u}}} = -  \nabla_{x} \mathbf{A}_{t}^{\mathbf{u}+\bar{\mathbf{u}}}  
D_{\mathbf{u}} \mathbf{X}_{t}^{\mathbf{u}+\bar{\mathbf{u}}}( \mathbf{A}_{t}^{\mathbf{u}+\bar{\mathbf{u}}}), 
\]
and consequently we have 
$$
\begin{aligned}
&D_{\mathbf{u}} \mathbf{A}_{t}^{\mathbf{u}+\bar{\mathbf{u}}}|_{\mathbf{u}=\mathbf{u}_0} [\mathbf{w}]= - \nabla_{x} \mathbf{A}_{t}^{\mathbf{u}_0+\bar{\mathbf{u}}}(\mathbf{A}_t^{\mathbf{u}_0+\bar{\mathbf{u}}})  
\\[5pt]
& \times  \exp \Big(\int_{0}^{t} \big(\nabla_{x}\mathbf{u}_0 (\mathbf{A}_{t-s}^{\mathbf{u}_0+\bar{\mathbf{u}}}) + \nabla_{x} \bar{\mathbf{u}} (s,\mathbf{A}_{t-s}^{\mathbf{u}_0+\bar{\mathbf{u}}})\big) \ ds \Big)  
\int_0^t \mathbf{w}(\mathbf{A}_{t-s}^{\mathbf{u}_0+ \bar{\mathbf{u}}}) \, ds. 
\end{aligned}
$$
\end{proof}

\section*{Data availability statement}

Data sharing is not applicable to this article as no data sets 
were generated or analysed during the current study.

 \section*{Acknowledgements}

The author Wladimir Neves has received research grants from CNPq
through the grant  313005/2023-0, 403675/2025-1, and also by FAPERJ 
(Cientista do Nosso Estado) through the grant E-26/204.171/2024. 

\medskip
The author Christian Olivera is partially supported by FAPESP by the grant  $2020/04426-6$,  
 by FAPESP-ANR by the grant Stochastic and Deterministic Analysis for Irregular Models$-2022/03379-0$ 
 and CNPq by the grant $422145/2023-8$.

 \section*{Conflict of Interest}
There is no conflict of interests.


\end{document}